\documentclass[smallextended]{svjour3}
\usepackage{latexsym,amssymb,amsmath}
\smartqed
\usepackage{graphicx}
\usepackage{xcolor}
\RequirePackage{fix-cm}
\usepackage[section]{placeins}
\def\F{Fr\'{e}chet}
\begin{document}

\title{Subdifferential Stability Analysis for Convex Optimization Problems via Multiplier Sets\footnote{Dedicated to Professor Michel Th\'era on the occasion of his seventieth birthday.}}

\author{D.T.V.~An        \and
        N.D.~Yen 
}

\institute{D.T.V.~An \at
              Department of Mathematics and Informatics, Thai Nguyen University of Sciences, Thai Nguyen city, Vietnam\\
                            \email{andtv@tnus.edu.vn.}                      \and
           N.D. Yen \at
              Institute
              of Mathematics, Vietnam Academy of Science and Technology, 18 Hoang
              Quoc Viet, Hanoi 10307, Vietnam\\
              \email{ndyen@math.ac.vn.}
}

\date{Received: date / Accepted: date}

\maketitle

\begin{abstract} This paper discusses differential stability of convex programming problems in Hausdorff locally convex  topological vector spaces. Among other things, we obtain formulas for computing or estimating the subdifferential and the singular subdifferential of the optimal value function via suitable multiplier sets.

\keywords {Hausdorff locally convex topological vector space \and Convex programming \and Optimal value function  \and Subdifferential \and Multiplier set.}

\subclass{49J27 \and 49K40 \and 90C25 \and  90C30 \and 90C31 \and 90C46}
\end{abstract}

\section{Introduction}
\markboth{\centerline{\it Introduction}}{\centerline{\it D.T.V.~An
		and N.D.~Yen}} \setcounter{equation}{0}

Investigations on differentiability properties of the \textit{optimal value function} and of the \textit{solution map} in parametric mathematical programming are usually classified as studies on differential stability of optimization problems. Some results in this direction can be found in  \cite{AnYao,AnYen,Aubin_1998,Auslender_1979,Bonnans_Shapiro_2000,Dien_Yen_1991,Gauvin_Dubeau_1982,Gauvin_Dubeau_1984,Gauvin_Tolle_1977,Gollan_1984,Mordukhovich_2006,MordukhovichEtAl_2009,Rockafellar_1982,Thibault_1991}, and the references therein. 

 For differentiable nonconvex programs, the works of Gauvin and Tolle \cite{Gauvin_Tolle_1977}, Gauvin and Dubeau \cite{Gauvin_Dubeau_1982}, and Lempio and Maurer \cite{Lempio_Maurer_1980} have had great impacts subsequently. The authors of the first two papers studied parametric programs in a finite-dimensional setting, while a Banach space setting was adopted in the third one. The main ideas of those papers are to use linear linearizations and a regularity condition (either the Mangasarian-Fromovitz Constraint Qualification, or the Robinson regularity condition). Formulas for computing or estimating the Dini directional derivatives, the classical directional derivative, or the Clarke generalized directional derivative and the Clarke generalized gradient of the optimal value function, when the problem data undergoes smooth perturbations, were given in \cite{Gauvin_Dubeau_1982,Gauvin_Tolle_1977,Lempio_Maurer_1980}. Gollan \cite{Gollan_1984}, Outrata~\cite{Outrata}, Penot~\cite{Penot1997}, Rockafellar \cite{Rockafellar_1982}, Thibault \cite{Thibault_1991}, and many other authors, have shown that similar results can be obtained for nondifferentiable nonconvex programs. In particular, the links of the subdifferential of the optimal value function in the contingent sense and in the \F \ sense with multipliers were pointed in~\cite{Penot1997}. Note also that, if the objective function is nonsmooth and the constraint set is described by a set-valued map, differential stability analysis can be investigated by the primal-space approach; see~\cite{Outrata} and the references therein. 
 
 For optimization problems with inclusion constraints in Banach spaces, differentiability properties of the optimal value function have been established via the dual-space approach by Mordukhovich \textit{et al.} in \cite{MordukhovichEtAl_2009}, where it is shown that the new general results imply several fundamental results which were obtained by the primal-space approach.

 Differential stability for convex programs has been studied intensively in the last five decades. A formula for computing the subdifferential of the optimal value function of a standard convex mathematical programming problem with right-hand-side perturbations, called the \textit{perturbation function}, via the set of the \textit{Kuhn-Tucker vectors} (i.e., the vectors of Kuhn-Tucker coefficients; see \cite[p.~274]{Rockafellar_1970}) was given by Rockafellar \cite[Theorem~29.1]{Rockafellar_1970}. Until now, many analogues and extensions of this classical result have been given in the literature.
   
 New results on the exact subdifferential calculation for optimal value functions involving coderivatives of constraint set mapping have been recently obtained by Mordukhovich \textit{et al.} \cite{Mordukhovich_Nam_Rector_Tran} for optimization problems in Hausdorff locally convex topological vector spaces, whose convex marginal functions are generated by arbitrary convex-graph multifunctions. Actually, these developments extend those started by Mordukhovich and Nam \cite[Sect.~2.6]{Mordukhovich_Nam2014} and \cite{Mordukhovich_Nam2015} in finite dimensions. 
 
 Recently, by using the Moreau-Rockafellar theorem and appropriate regularity conditions, An and Yao \cite{AnYao}, An and Yen \cite{AnYen} have obtained formulas for computing the subdifferential and the singular subdifferential of the optimal value function of infinite-dimensional convex optimization problems under inclusion constraints and of infinite-dimensional convex optimization problems under geometrical and functional constraints. Coderivative of the constraint multifunction, subdifferential, and singular subdifferential of the objective function are the main ingredients in those formulas. 
 
 The present paper discusses differential stability of convex programming problems in Hausdorff locally convex  topological vector spaces. Among other things, we obtain formulas for computing or estimating the subdifferential and the singular subdifferential of the optimal value function via suitable multiplier sets.
 Optimality conditions for convex optimization problems under inclusion constraints and for convex optimization problems under geometrical and functional constraints will be formulated too. But our main aim is to clarify the connection between the subdifferentials of the optimal value function and certain multiplier sets. Namely, by using some results from~\cite{AnYen}, we derive an upper estimate for the subdifferentials via the Lagrange multiplier sets and give an example to show that the upper estimate can be strict. Then, by defining a satisfactory multiplier set, we obtain formulas for computing the subdifferential and the singular subdifferential of the optimal value function. 
 
 As far as we understand, Theorems \ref{computing_subdifferential} and \ref{thm_singular_computing} in this paper have no analogues in the vast literature on differential stability analysis of parametric optimization problems. Here, focusing on convex problems, we are able to give exact formulas for the subdifferential in question under a minimal set of assumptions. It can be added also that the upper estimates in Theorems~\ref{outer_theorem} and~\ref{outer_theorem_singular} are based on that set of assumptions, which is minimal in some sense. 
 
 As one referee of our paper has observed that the results in the convex framework are essentially different from nonconvex ones given, e.g., in the book by Mordukhovich  \cite{Mordukhovich_2006}. The main difference of the results in the present paper and those from \cite{Mordukhovich_Nam2014,Mordukhovich_Nam2015}, and \cite[Theorem~7.2]{Mordukhovich_Nam_Rector_Tran}, is that the latter ones are expressed in terms of the coderivatives of the general convex-graph mappings, while the former ones are given directly via Lagrange multipliers associated with the convex programming constraints. Note that the coderivative calculations for such constraint mappings are presented, e.g., in \cite{Mordukhovich_2006} and the convex
 extremal principle established in \cite[Theorem~2.2]{Mordukhovich_Nam2017} is a main tool of \cite{Mordukhovich_Nam_Rector_Tran}.

 As examples of application of theoretical results on sensitivity analysis (in particular, of exact formulas for computing derivative of the optimal value function) to practical problems, we refer to \cite[Sects.~1 and~6]{Burke}, where the authors considered perturbed linear optimization programs. The results obtained in this paper can be applied to perturbed convex optimization problems in the same manner.
 
The organization of the paper is as follows. Section 2 recalls some definitions from convex analysis, variational analysis, together with several auxiliary results. In Section 3, optimality conditions for convex optimization problems are obtained under suitable regularity conditions. Section 4 establishes formulas for computing and estimating the subdifferential of the optimal value function via multiplier sets. Formulas for computing and estimating the singular subdifferential of that optimal value function are given in Section 5.

\section{Preliminaries}
\markboth{\centerline{\it Preliminaries}}{\centerline{\it D.T.V.~An
and N.D.~Yen}} \setcounter{equation}{0}

Let $X$ and $Y$ be Hausdorff locally convex topological vector spaces with the topological duals denoted, respectively, by $X^*$ and $Y^*$. For a convex set $\Omega\subset X$, the \textit{normal cone} of $\Omega$ at $\bar x\in \Omega$ is given by \begin{align} N(\bar x; \Omega)=\{x^*\in X^* \mid \langle x^*, x-\bar x \rangle \leq 0, \ \, \forall x \in \Omega\}.\end{align} 

Consider a function  $f:X \rightarrow \overline{\mathbb{R}}$ having values in the extended real line $\overline{\mathbb{R}}=[- \infty, + \infty]$. One says that $f$ is \textit{proper} if $f(x) > - \infty$ for all $x \in X$ and if the \textit{domain}	${\rm{dom}}\, f:=\{ x \in X \mid f(x) < +\infty\}$ is nonempty. The set $ {\rm{epi}}\, f:=\{ (x, \alpha) \in X \times \mathbb{R} \mid \alpha \ge f(x)\}$ is called the \textit{epigraph} of $f$. If ${\rm{epi}}\, f$ is a convex (resp., closed) subset of $X \times {\mathbb{R}}$, $f$ is said to be a \textit{convex} (resp., {\it closed}) function.
			\medskip
			
	 The {\it subdifferential} of a proper convex function $f: X\rightarrow \overline{\mathbb{R}}$ at a point $\bar x \in {\rm dom}\,f$ is defined by
		\begin{align}\label{subdifferential_convex_analysis}
		\partial f(\bar x)=\{x^* \in X^* \mid \langle x^*, x- \bar x \rangle \le f(x)-f(\bar x), \ \forall x \in X\}.
		\end{align}	
		
		 The {\it singular subdifferential} of a proper convex function $f: X\rightarrow \overline{\mathbb{R}}$ at a point $\bar x \in {\rm dom}\,f$ is given by
				\begin{align}\label{singular_subdifferential_convex_analysis}
				\partial^\infty f(\bar x)=\{x^* \in X^* \mid (x^*,0)\in N ( (\bar x, f(\bar x));{\rm epi}\,f)\}.
				\end{align}	
		By convention, if $\bar x \notin {\rm dom}\,f$, then $\partial f(\bar x)=\emptyset$ and $\partial^\infty f(\bar x)=\emptyset$.
			
		Note that $x^* \in \partial f(\bar x)$ if and only if $\langle x^*, x-\bar x \rangle -\alpha f(\bar x) \le 0$ for all $(x, \alpha) \in {\rm epi}\, f$
				or, equivalently, $(x^*,-1)\in N( (\bar x, f(\bar x)); {\rm epi}\, f)$. Also, it is easy to show that $\partial\iota_\Omega (x)=N(x;\Omega)$ where $\iota_\Omega (\cdot)$ is the {\it indicator function} of a convex set $\Omega \subset X$. Recall that $\iota_\Omega (x)=0$ if $x \in \Omega$ and $ \iota_\Omega (x)=+\infty$ if $x \notin \Omega$. Interestingly, for any convex function $f$, one has $\partial^\infty f(\bar x)=N(\bar x; {\rm dom}\,f)=\partial  \iota_{{\rm dom}\,f} (\bar x)$~(see \cite[Proposition~4.2]{AnYen}).
	
		One says that a multifunction $F:X\rightrightarrows Y$ is {\it closed} (resp., {\it convex}) if ${\rm gph}\,F$ is a closed (resp., convex) set, where ${\rm gph}\,F:=\{(x,y)\in X \times Y \mid y \in F(x) \}$.			
	
	Given a convex function $\varphi: X \times Y \rightarrow \overline{\mathbb{R}}$, we denote by $\partial_x \varphi(\bar x, \bar y)$ and $\partial_y \varphi(\bar x, \bar y)$, respectively, its \textit{partial subdifferentials} in $x$ and $y$  at $(\bar x,\bar y)$. Thus, $\partial_x \varphi(\bar x, \bar y)=\partial \varphi(., \bar y)(\bar x)$ and $\partial_y \varphi(\bar x, \bar y)=\partial\varphi (\bar x, .)(\bar y)$, provided that the expressions on the right-hand-sides are well defined. It is easy to check that
	\begin{align}\label{inclusion_partial}
	\partial \varphi(\bar x, \bar y) \subset \partial_x \varphi(\bar x, \bar y)\times \partial_y \varphi(\bar x, \bar y).
	\end{align}
		Let us show that inclusion \eqref{inclusion_partial} can be strict. 
	\begin{example}\label{Ex1} {\rm Let $X=Y=\mathbb{R}$, $\varphi(x,y)=|x+y|$, and $\bar x=\bar y=0$. Since  
			$$\varphi(x,y)=|x+y|=\max\{x+y,-x-y\},$$ by applying a well known formula giving an exact expression of the subdifferential of the maximum function \cite[Theorem~3, pp.~201--202]{Ioffe_Tihomirov_1979}
			we get $$\partial \varphi (\bar x, \bar y)={\rm co}\left\{(1,1)^T,\, (-1,-1)^T \right\},$$
			where ${\rm co}\,\Omega$ denotes the \textit{convex hull} of $\Omega$. Since $\partial_x \varphi (\bar x, \bar y)=\partial_y \varphi (\bar x, \bar y)=[-1,1]$, we see that $\partial_x \varphi(\bar x, \bar y)\times \partial_y \varphi(\bar x, \bar y )\not\subset \partial \varphi(\bar x, \bar y).$}
		\end{example}

	In the sequel, we will need the following fundamental calculus rule of convex analysis. 
		
		\begin{theorem} \label{MoreauRockafellar}{\rm (The Moreau-Rockafellar Theorem) (See \cite[Theorem 0.3.3 on pp. 47--50, Theorem 1 on p.~200]{Ioffe_Tihomirov_1979})} Let $f_1,\dots,f_m$ be proper convex functions on $X$. Then
		$$\partial (f_1+\dots+f_m)(x) \supset\partial f_1(x) +\dots+\partial f_m(x)$$ for all $x\in X$.
		If, at a point $x^0\in {\rm dom}\,f_1\cap\dots\cap {\rm dom}\,f_m$, all the functions $f_1,\dots,f_m$, except, possibly, one are continuous, then 
		$$ \partial (f_1+\dots+f_m)(x) =  \partial f_1(x) +\dots + \partial f_m(x)$$ for all $x\in X$.
		\end{theorem}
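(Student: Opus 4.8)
The plan is to prove the inclusion ``$\supset$'' directly, since it needs no regularity, and to obtain the reverse inclusion from a Hahn--Banach separation argument after reducing the number of summands to two. For ``$\supset$'', if $x\notin\mathrm{dom}\,f_i$ for some $i$ the right-hand side is empty (by the convention $\partial f_i(x)=\emptyset$) and there is nothing to prove; otherwise, for $x_i^*\in\partial f_i(x)$, $i=1,\dots,m$, I would simply add the inequalities $\langle x_i^*,y-x\rangle\le f_i(y)-f_i(x)$, valid for all $y\in X$, to get $\langle x_1^*+\dots+x_m^*,y-x\rangle\le(f_1+\dots+f_m)(y)-(f_1+\dots+f_m)(x)$ for every $y$, i.e. $x_1^*+\dots+x_m^*\in\partial(f_1+\dots+f_m)(x)$.

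For the reverse inclusion under the continuity hypothesis, I would induct on $m$, the case $m=1$ being trivial. After relabelling, assume $f_2,\dots,f_m$ are continuous at $x^0\in\mathrm{dom}\,f_1\cap\dots\cap\mathrm{dom}\,f_m$, so that $f_1$ is the one allowed to fail continuity. Write $f_1+\dots+f_m=h+f_m$ with $h:=f_1+\dots+f_{m-1}$, which is proper and convex with $x^0\in\mathrm{dom}\,h$. Granting the two-summand case for the pair $(h,f_m)$ — here $f_m$ is continuous at $x^0\in\mathrm{dom}\,h\cap\mathrm{dom}\,f_m$ — gives $\partial(h+f_m)(x)\subset\partial h(x)+\partial f_m(x)$; and the induction hypothesis applied to $h$, whose summands $f_1,\dots,f_{m-1}$ are all continuous at $x^0$ except possibly $f_1$, gives $\partial h(x)\subset\partial f_1(x)+\dots+\partial f_{m-1}(x)$. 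Combining these yields the desired inclusion.

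It remains to handle $m=2$. Let $f_1$ be continuous at $x^0\in\mathrm{dom}\,f_1\cap\mathrm{dom}\,f_2$ and $x^*\in\partial(f_1+f_2)(x)$, so $x\in\mathrm{dom}\,f_1\cap\mathrm{dom}\,f_2$. Replacing $f_1$ by $f_1-\langle x^*,\cdot\rangle$ (still proper convex and continuous at $x^0$), I may assume $x^*=0$, i.e. $x$ minimizes $f_1+f_2$, so $f_1(y)-f_1(x)\ge f_2(x)-f_2(y)$ for all $y$. In $X\times\mathbb{R}$ consider the convex sets
\[ C=\{(y,\alpha):\alpha\ge f_1(y)-f_1(x)\},\qquad D=\{(y,\alpha):\alpha\le f_2(x)-f_2(y)\}. \]
The minimality of $x$ gives $(\mathrm{int}\,C)\cap D=\emptyset$ (a point of $\mathrm{int}\,C$ has $\alpha>f_1(y)-f_1(x)\ge f_2(x)-f_2(y)$), the continuity of $f_1$ at $x^0$ gives $\mathrm{int}\,C\ne\emptyset$, and $(x,0)\in C\cap D$. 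By the Hahn--Banach separation theorem there is then a nonzero $(x_1^*,\lambda)\in X^*\times\mathbb{R}$, with $\lambda\ge0$ (forced by letting $\alpha\to+\infty$ in the first inequality below and using that the hyperplane passes through $(x,0)$), such that
\[ \langle x_1^*,y-x\rangle\le\lambda\alpha\ \text{ for }(y,\alpha)\in C,\qquad\langle x_1^*,y-x\rangle\ge\lambda\alpha\ \text{ for }(y,\alpha)\in D. \]
After excluding $\lambda=0$ (see below) I rescale to $\lambda=1$; taking $\alpha=f_1(y)-f_1(x)$ in the first inequality gives $x_1^*\in\partial f_1(x)$, taking $\alpha=f_2(x)-f_2(y)$ in the second gives $-x_1^*\in\partial f_2(x)$, hence $0=x_1^*+(-x_1^*)\in\partial f_1(x)+\partial f_2(x)$; undoing the normalization shows $x^*\in\partial f_1(x)+\partial f_2(x)$. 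Finally, when $x\notin\mathrm{dom}(f_1+\dots+f_m)=\mathrm{dom}\,f_1\cap\dots\cap\mathrm{dom}\,f_m$ both sides of the asserted equality are empty, so it holds for every $x\in X$.

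The main obstacle is the separation step, and more precisely the exclusion of the degenerate ``vertical'' case $\lambda=0$ — this is exactly where the continuity assumption is indispensable. Indeed, if $\lambda=0$ then the two inequalities give $\langle x_1^*,y-x\rangle\le0$ for all $y\in\mathrm{dom}\,f_1$ and $\langle x_1^*,y-x\rangle\ge0$ for all $y\in\mathrm{dom}\,f_2$; evaluating at $y=x^0$ forces $\langle x_1^*,x^0-x\rangle=0$, and since continuity of $f_1$ at $x^0$ places $x^0$ in the interior of $\mathrm{dom}\,f_1$, the first inequality then forces $x_1^*$ to vanish on a neighbourhood of the origin, whence $x_1^*=0$, contradicting $(x_1^*,\lambda)\ne0$. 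Continuity is also what secures $\mathrm{int}\,C\ne\emptyset$, without which no separation would be available.
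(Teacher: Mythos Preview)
The paper does not prove Theorem~\ref{MoreauRockafellar}; it is quoted as a background result from Ioffe--Tihomirov and used as a black box in the later sections. So there is no ``paper's own proof'' to compare against.

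That said, your argument is correct and is essentially the classical proof one finds in the cited reference. The easy inclusion is handled properly, the induction reducing $m$ summands to two is sound (the two-summand case applies to the pair $(h,f_m)$ because $f_m$ is the continuous one; your $m=2$ write-up just swaps the labels), and the Hahn--Banach separation of the shifted epigraph and hypograph is carried out cleanly. The two delicate points---nonemptiness of $\mathrm{int}\,C$ and the exclusion of the degenerate case $\lambda=0$---are both powered by the continuity of one summand at $x^0$, and you identify and resolve them correctly. One cosmetic remark: when you say ``a point of $\mathrm{int}\,C$ has $\alpha>f_1(y)-f_1(x)$'' you are implicitly using the characterization $\mathrm{int}(\mathrm{epi}\,f)=\{(y,\alpha):y\in\mathrm{int}(\mathrm{dom}\,f),\ \alpha>f(y)\}$ for a convex function continuous somewhere; this is standard (and is stated in the paper's preliminaries), but it would not hurt to cite it explicitly.
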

		
		Another version of the above Moreau-Rockafellar Theorem, which is based on a geometrical regularity condition of Aubin's type, will be used later on. Note that Aubin \cite[Theorem 4.4, p. 67]{Aubin_1998} only proved this result in a Hilbert space setting, but he observed that it is also valid in a reflexive Banach space setting. It turns out that the reflexivity of the Banach space under consideration can be omitted. A detailed proof of the following theorem can be found in~\cite{Bonnans_Shapiro_2000}.
						
		\begin{theorem} \label{Aubin_sumrule}{\rm (See {\cite[Theorem 2.168 and Remark 2.169]{Bonnans_Shapiro_2000}})} Let $X$ be a Banach space. If $f,g: X \rightarrow \overline{\mathbb{R}}$ are proper, closed, convex functions and the regularity condition
				\begin{align}\label{Regularity_condition}
				0 \in\, {\rm{int}} ({\rm dom}\,f- {\rm dom}\,g)
				\end{align}
				holds, then for any $x \in ({\rm dom}\, f)\cap ({\rm dom}\,g)$ we have
				\begin{align}\label{Sum_rule}
				\partial (f + g )(x) =\partial f(x) + \partial g (x),
				\end{align}
				where ${\rm{int}}\,\Omega$ denotes the  interior of a set $\Omega$.
								\end{theorem}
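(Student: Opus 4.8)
The plan is as follows. The inclusion ``$\supseteq$'' in \eqref{Sum_rule} needs no regularity at all --- it is the first assertion of Theorem~\ref{MoreauRockafellar}, and it also drops out immediately by adding the two subgradient inequalities coming from \eqref{subdifferential_convex_analysis} --- so the whole content lies in the reverse inclusion, where the completeness of $X$ and the qualification condition \eqref{Regularity_condition} must be used. I would start from an arbitrary $x^*\in\partial(f+g)(x)$, $x\in(\operatorname{dom}f)\cap(\operatorname{dom}g)$, and first normalize: replacing $f$ by $f-\langle x^*,\,\cdot\,-x\rangle$, which is again proper, closed and convex, has the same domain (hence still satisfies \eqref{Regularity_condition}) and has $\partial f(x)$ shifted by $-x^*$, one may assume $x^*=0$, so that $x$ becomes a global minimizer of $f+g$ on $X$. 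After this reduction it suffices to find $u^*\in X^*$ with $u^*\in\partial g(x)$ and $-u^*\in\partial f(x)$: then $(-u^*)+u^*=0$, and undoing the normalization gives $x^*=(x^*-u^*)+u^*\in\partial f(x)+\partial g(x)$.

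To produce $u^*$ I would introduce the marginal function $p(h):=\inf_{y\in X}\bigl[f(y)+g(y-h)\bigr]$. It is convex, satisfies $p(0)=f(x)+g(x)\in\mathbb R$ because $x$ minimizes $f+g$, and has $\operatorname{dom}p\supseteq\operatorname{dom}f-\operatorname{dom}g$, so that \eqref{Regularity_condition} forces $0\in\operatorname{int}(\operatorname{dom}p)$; a one-line convexity argument then shows that $p$ is also proper. The crucial claim is that $\partial p(0)\neq\emptyset$. Once it is available, choosing $-u^*\in\partial p(0)$ gives $p(h)\ge p(0)-\langle u^*,h\rangle$ for every $h$, hence
\begin{equation*}
f(y)+g(y-h)\ \ge\ f(x)+g(x)-\langle u^*,h\rangle \qquad\text{for all } y,h\in X .
\end{equation*}
Specializing $y=x$ gives $g(x-h)-g(x)\ge\langle u^*,-h\rangle$ for all $h$, i.e.\ $u^*\in\partial g(x)$; specializing $h=y-x$ gives $f(y)-f(x)\ge\langle -u^*,\,y-x\rangle$ for all $y$, i.e.\ $-u^*\in\partial f(x)$. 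This finishes the inclusion ``$\subseteq$'' in \eqref{Sum_rule}.

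The main obstacle is precisely the claim $\partial p(0)\neq\emptyset$. I would prove it by showing that $p$ is bounded above on a neighbourhood of $0$, since a convex function bounded above near a point is continuous there, and hence subdifferentiable there by the Hahn--Banach theorem. This local boundedness is the only place where the completeness of $X$ and \eqref{Regularity_condition} are genuinely used, and it is of Robinson--Ursescu (Baire category) type: with $A_n:=\{\,y_1-y_2\mid f(y_1)\le n,\ g(y_2)\le n\,\}$ one gets an increasing family of convex sets with $\bigcup_n A_n=\operatorname{dom}f-\operatorname{dom}g$, so $\bigcup_n\overline{A_n}$ contains a closed ball around $0$; by the Baire category theorem some $\overline{A_{n_0}}$ has nonempty interior, a segment argument (pairing an interior point of $\overline{A_{n_0}}$ with a point of some $A_{n_1}$ lying on the opposite side of $0$, which exists because $0\in\operatorname{int}\bigcup_n A_n$) yields $0\in\operatorname{int}\overline{A_m}$ for some $m$, and an open-mapping-type iteration then controls $p$ from above on a neighbourhood of $0$. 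This is the technical heart of the matter and is carried out in detail in \cite[Theorem~2.168 and Remark~2.169]{Bonnans_Shapiro_2000} (compare \cite[Theorem~4.4]{Aubin_1998}); every other step above is routine bookkeeping.
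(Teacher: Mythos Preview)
The paper does not give its own proof of this theorem: it is quoted as a known result, with the sentence ``A detailed proof of the following theorem can be found in~\cite{Bonnans_Shapiro_2000}'' immediately preceding the statement. So there is nothing in the paper to compare your argument against line by line.

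That said, your proposal is correct and is precisely the classical route taken in \cite[Theorem~2.168]{Bonnans_Shapiro_2000}: reduce to $x^*=0$, introduce the perturbation function $p(h)=\inf_{y}\bigl[f(y)+g(y-h)\bigr]$, use \eqref{Regularity_condition} together with a Baire/Robinson--Ursescu argument on the sets $A_n=\{f\le n\}-\{g\le n\}$ to force $p$ to be bounded above (hence continuous, hence subdifferentiable) at $0$, and then read off the decoupled subgradients from any $-u^*\in\partial p(0)$. The closedness of $f$ and $g$ enters exactly where you indicate, namely in the ``open-mapping-type iteration'' that passes from $0\in\operatorname{int}\overline{A_m}$ to an honest upper bound for $p$ near $0$; without lower semicontinuity the sublevel sets need not be closed and that step can fail. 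Your properness check for $p$ is also fine: if $p(h_0)=-\infty$ for some $h_0$, convexity forces $p=-\infty$ on the open segment $(0,h_0]$, and combining this with any point $-\varepsilon h_0\in\operatorname{dom}p$ (available since $0\in\operatorname{int}(\operatorname{dom}p)$) contradicts $p(0)\in\mathbb R$.
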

								
 By using the indicator functions of convex sets, one can easily derive from Theorem~\ref{MoreauRockafellar} the next intersection formula. 

\begin{proposition}\label{intersection_formula} {\rm (See \cite[p.~205]{Ioffe_Tihomirov_1979})}  Let $A_1,A_2,\dots,A_m$ be convex subsets of $X$ and  $A=A_1\cap A_2\cap\dots\cap A_m$. Suppose that $A_1\cap ({\rm{int}}\, A_2)\cap\dots\cap  ({\rm{int}}\, A_m) \not= \emptyset$. Then,
$$ N(x;A)=N(x;A_1)+N(x;A_2)+\dots+N(x;A_m), \quad  \forall x\in X.$$
\end{proposition}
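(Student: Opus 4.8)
The plan is to reduce the intersection formula to the Moreau--Rockafellar Theorem (Theorem~\ref{MoreauRockafellar}) by passing to indicator functions. First I would set $f_i := \iota_{A_i}$ for $i = 1,\dots,m$; each $f_i$ is a proper convex function on $X$ since $A_i$ is a nonempty convex set (nonemptiness follows from the hypothesis $A_1 \cap ({\rm int}\,A_2) \cap \dots \cap ({\rm int}\,A_m) \neq \emptyset$, which in particular forces each $A_i$ to be nonempty). The key identity to invoke is $\iota_{A_1} + \dots + \iota_{A_m} = \iota_{A}$, where $A = A_1 \cap \dots \cap A_m$, together with the two facts recorded in the Preliminaries: $\partial \iota_\Omega(x) = N(x;\Omega)$ for any convex set $\Omega$, and the additivity part of Theorem~\ref{MoreauRockafellar}. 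Applying the subdifferential sum rule to $\iota_A = \iota_{A_1} + \dots + \iota_{A_m}$ would then give exactly $N(x;A) = N(x;A_1) + \dots + N(x;A_m)$ for all $x \in X$.

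To legitimately apply the additivity conclusion of Theorem~\ref{MoreauRockafellar}, I must verify its continuity hypothesis: at some common point $x^0 \in {\rm dom}\,f_1 \cap \dots \cap {\rm dom}\,f_m = A_1 \cap \dots \cap A_m$, all but possibly one of the $f_i$ are continuous. Here is where the regularity assumption $A_1 \cap ({\rm int}\,A_2) \cap \dots \cap ({\rm int}\,A_m) \neq \emptyset$ enters: I would pick $x^0$ in this set, so that $x^0 \in A_1$ and $x^0 \in {\rm int}\,A_i$ for $i = 2,\dots,m$. The indicator function $\iota_{A_i}$ is identically $0$ on a neighborhood of any interior point of $A_i$, hence continuous at $x^0$ for each $i \geq 2$; only $\iota_{A_1}$ is allowed to be discontinuous there, which is exactly the "all except possibly one are continuous" situation. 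Also $x^0 \in {\rm dom}\,f_1 \cap \dots \cap {\rm dom}\,f_m$ since $x^0$ lies in every $A_i$, so the theorem applies at $x^0$ and its conclusion holds for all $x \in X$.

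The main (indeed the only) subtlety is the placement of the interior operators: the hypothesis asks for $A_1$ itself (not ${\rm int}\,A_1$) to meet the interiors of $A_2,\dots,A_m$, and this asymmetry is precisely what matches the asymmetric continuity hypothesis of Theorem~\ref{MoreauRockafellar} — one function is exempted. I would make sure the write-up notes that the labeling of the sets is immaterial, so the condition is really "one of the $A_i$ meets the intersection of the interiors of the others." Everything else is routine: the identity $\sum_i \iota_{A_i} = \iota_A$ is immediate from the definition of the indicator function and of intersection, and the translation between $\partial \iota_\Omega$ and $N(\cdot;\Omega)$ was already established in the Preliminaries, so no further work is needed there.
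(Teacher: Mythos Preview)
Your proposal is correct and follows exactly the route the paper indicates: the sentence preceding Proposition~\ref{intersection_formula} says ``By using the indicator functions of convex sets, one can easily derive from Theorem~\ref{MoreauRockafellar} the next intersection formula,'' which is precisely your reduction via $f_i=\iota_{A_i}$ and the continuity of $\iota_{A_i}$ at an interior point. The paper gives no further details beyond that one-line hint, so your write-up is in fact more complete than what appears there.
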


The forthcoming theorem characterizes continuity of extended-real-valued convex functions defined on Hausdorff locally convex topological vector spaces.

\begin{theorem}
{\rm(See \cite[p.~170]{Ioffe_Tihomirov_1979})}
Let $f$ be a proper convex function on a Hausdorff locally convex topological vector space $X$. Then the following assertions are equivalent:
\par {\rm{(i)}} $f$ is bounded from above on a neighborhood of a point $x\in X$;

\par{\rm{(ii)}} $f$ is continuous at a point $x\in X$;

\par {\rm{(iii)}} ${\rm{int(epi}}\, f) \not=\emptyset;$

\par {\rm{(iv)}} ${\rm{int(dom}}\, f) \not=\emptyset$ and $f$ is continuous on ${\rm{int(dom}}\, f).$
Moreover,
$$ {\rm{int(epi}}\, f)=\{ (\alpha , x) \in \Bbb{R} \times X \mid x \in {\rm{int(dom}}\, f), \alpha > f(x)\}. $$
\end{theorem}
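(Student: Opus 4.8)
The strategy is to establish the cyclic chain $(i)\Rightarrow(ii)\Rightarrow(iii)\Rightarrow(iv)\Rightarrow(i)$ and then the formula for ${\rm int(epi}\,f)$. Once $(i)\Rightarrow(ii)$ is available, the other three implications are short, so the only genuine obstacle is $(i)\Rightarrow(ii)$: upgrading \emph{boundedness from above on a neighborhood} to \emph{continuity at a point}.

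For $(i)\Rightarrow(ii)$ I would argue by a normalization followed by a convexity estimate. If $f\le M_0$ on a neighborhood of $x_0$, then $f(x_0)\le M_0<+\infty$ and, $f$ being proper, $f(x_0)>-\infty$, so $f(x_0)\in\mathbb{R}$; replacing $f$ by $g(\cdot)=f(\cdot+x_0)-f(x_0)$ (which does not affect continuity) I may assume $x_0=0$, $f(0)=0$, and $f\le M$ on a \emph{balanced} neighborhood $V$ of the origin, with $M\ge f(0)=0$. Fix $\varepsilon\in(0,1)$, let $x\in\varepsilon V$ and write $x=\varepsilon v$ with $v\in V$; since $V$ is balanced, also $-v\in V$. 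Convexity together with $f(0)=0$ gives the upper bound $f(x)=f(\varepsilon v+(1-\varepsilon)0)\le\varepsilon f(v)\le\varepsilon M$, while from $0=\frac{1}{1+\varepsilon}x+\frac{\varepsilon}{1+\varepsilon}(-v)$ and convexity once more, $0=f(0)\le\frac{1}{1+\varepsilon}f(x)+\frac{\varepsilon}{1+\varepsilon}M$, i.e. $f(x)\ge-\varepsilon M$. Hence $|f(x)|\le\varepsilon M$ on the neighborhood $\varepsilon V$ of $0$, and since $\varepsilon\in(0,1)$ is arbitrary this is continuity of $f$ at $0$. The point needing care is to work from the outset with a \emph{balanced} neighborhood, so that both $\varepsilon V\subset V$ and the reflected vector $-v\in V$ are legitimate; such neighborhoods exist in every topological vector space, hence in the locally convex setting assumed here.

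The remaining implications are routine. $(ii)\Rightarrow(iii)$: if $f$ is continuous at $x$, choose an open neighborhood $U$ of $x$ with $f<f(x)+1$ on $U$; then the open, nonempty set $U\times(f(x)+1,+\infty)$ lies in ${\rm epi}\,f$. $(iii)\Rightarrow(iv)$: from $(x_1,\alpha_1)\in{\rm int(epi}\,f)$, a basic neighborhood $U\times(\alpha_1-\delta,\alpha_1+\delta)\subset{\rm epi}\,f$ forces $f\le\alpha_1$ on the open set $U$, so $U\subset{\rm dom}\,f$ (hence ${\rm int(dom}\,f)\ne\emptyset$) and, by $(i)\Rightarrow(ii)$, $f$ is continuous at $x_1$; to get continuity at an arbitrary $\bar x\in{\rm int(dom}\,f)$ I would rescale: pick $s>1$ with $x_2:=x_1+s(\bar x-x_1)\in{\rm dom}\,f$ (possible since $\bar x$ is interior to ${\rm dom}\,f$), put $\lambda=1/s\in(0,1)$ so that $\bar x=(1-\lambda)x_1+\lambda x_2$, and note that $W:=(1-\lambda)U+\lambda x_2$ is an open neighborhood of $\bar x$ on which convexity gives $f\le(1-\lambda)\alpha_1+\lambda f(x_2)<+\infty$, so $(i)\Rightarrow(ii)$ applies again. $(iv)\Rightarrow(i)$ is immediate.

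Finally, for the displayed identity, the inclusion ``$\subseteq$'' just repeats the basic-neighborhood argument: a point $(x,\alpha)\in{\rm int(epi}\,f)$ has a basic neighborhood $U\times(\alpha-\delta,\alpha+\delta)\subset{\rm epi}\,f$, whence $f\le\alpha-\delta/2$ on the open set $U$, so $x\in{\rm int(dom}\,f)$ and $f(x)\le\alpha-\delta/2<\alpha$. For ``$\supseteq$'', given $x\in{\rm int(dom}\,f)$ and $\alpha>f(x)$, set $\varepsilon=(\alpha-f(x))/2>0$ and, using continuity of $f$ at $x$ (part (iv)), choose an open $U\subset{\rm int(dom}\,f)$ with $f<f(x)+\varepsilon=\alpha-\varepsilon$ on $U$; then $U\times(\alpha-\varepsilon,\alpha+\varepsilon)\subset{\rm epi}\,f$, so $(x,\alpha)\in{\rm int(epi}\,f)$. (Thus the identity is read under the equivalent conditions (i)--(iv), i.e. when $f$ is continuous on ${\rm int(dom}\,f)$.) I expect the normalization-plus-convexity estimate for $(i)\Rightarrow(ii)$ to be the only real work; everything else is bookkeeping with basic neighborhoods in $X\times\mathbb{R}$.
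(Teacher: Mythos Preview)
Your argument is correct and is essentially the classical proof found in Ioffe--Tihomirov, to which the paper defers: the paper does not supply its own proof of this theorem but merely cites \cite[p.~170]{Ioffe_Tihomirov_1979}. The normalization-plus-balanced-neighborhood estimate you give for $(i)\Rightarrow(ii)$ is exactly the standard device, and the remaining implications and the formula for ${\rm int(epi}\,f)$ are handled correctly; the only cosmetic point is that the theorem as stated in the paper writes points of the epigraph as $(\alpha,x)\in\mathbb{R}\times X$, whereas your proof (consistently with the paper's own definition of ${\rm epi}\,f$) uses $(x,\alpha)\in X\times\mathbb{R}$.
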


\medskip

The following infinite-dimensional version of the Farkas lemma \cite[p.~200]{Rockafellar_1970} has been obtained by Bartl \cite{Bartl_2008}.

\begin{lemma}\label{Farkas_lemma}{\rm{(See \cite[Lemma~1]{Bartl_2008})}}
Let $W$ be a vector space over the reals. Let $A: W \rightarrow \Bbb{R}^m$ be a linear mapping and $\gamma : W \rightarrow \Bbb{R}$ be a linear functional. Suppose that $A$ is represented in the form $A=(\alpha_i)_i^m$, where each  $\alpha_i:W\to \Bbb{R}$ is a linear functional (i.e., 
for each $x\in W$, $A(x)$ is a column vector whose $i-th$ component is $\alpha_i(x)$, for $i=1,\dots,m$). Then, the inequality $\gamma(x) \le 0$ is a consequence of the inequalities system
$$\alpha_1(x) \le 0, \ \alpha_2(x) \le 0,\dots, \ \alpha_m(x) \le 0$$
if and only if there exist nonnegative real numbers $\lambda_1, \lambda_2,\dots,\lambda_m \ge 0$ such that
$$\gamma=\lambda_1\alpha_1+\dots+\lambda_m \alpha_m.$$
\end{lemma}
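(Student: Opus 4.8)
The plan is to reduce this statement to the classical finite-dimensional Farkas lemma (\cite[p.~200]{Rockafellar_1970}), exploiting the fact that, although the domain $W$ may be infinite-dimensional, the range of $A$ is a subspace of $\mathbb{R}^m$. The ``if'' part is immediate: if $\gamma=\lambda_1\alpha_1+\dots+\lambda_m\alpha_m$ with all $\lambda_i\ge 0$ and $\alpha_i(x)\le 0$ for every $i$, then $\gamma(x)=\sum_{i=1}^m\lambda_i\alpha_i(x)\le 0$, so $\gamma(x)\le 0$ is a consequence of the system. All the work is in the ``only if'' direction.

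For that direction, I would first observe that $\ker A\subset\ker\gamma$: if $A(x)=0$, then $\alpha_i(\pm x)=0\le 0$ for all $i$, so the hypothesis yields $\gamma(x)\le 0$ and $\gamma(-x)\le 0$, hence $\gamma(x)=0$. Therefore $\gamma$ factors through $A$: writing $V:=A(W)$, a linear subspace of $\mathbb{R}^m$, the rule $\bar\gamma(A(x)):=\gamma(x)$ well-defines a linear functional $\bar\gamma$ on $V$; extending $\bar\gamma$ to a linear functional on all of $\mathbb{R}^m$ and representing that extension as $z\mapsto\langle c,z\rangle$ for a suitable $c=(c_1,\dots,c_m)\in\mathbb{R}^m$, we obtain $\gamma=c_1\alpha_1+\dots+c_m\alpha_m$. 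What remains is to replace $c$ by a \emph{nonnegative} vector without altering the functional it induces on $V$, i.e. to find $\lambda\ge 0$ in $\mathbb{R}^m$ with $\lambda-c\perp V$.

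I would cast this as a solvability question for a linear inequality system in $\mathbb{R}^m$: choosing a basis $n_1,\dots,n_p$ of the orthogonal complement $V^{\perp}$, I need $\mu\in\mathbb{R}^p$ such that $c+\mu_1 n_1+\dots+\mu_p n_p\ge 0$. By the classical theorem of the alternative for linear inequalities (a standard consequence of the finite Farkas lemma), non-solvability of this system would produce $y\ge 0$ in $\mathbb{R}^m$ with $\langle y,n_j\rangle=0$ for every $j$ — hence $y\in(V^{\perp})^{\perp}=V$ — and $\langle c,y\rangle<0$. But $y\in V$ means $y=A(x)$ for some $x\in W$, and $y\ge 0$ forces $\alpha_i(-x)=-y_i\le 0$ for all $i$, so the hypothesis gives $\gamma(-x)\le 0$, i.e. $\langle c,y\rangle=\langle c,A(x)\rangle=\gamma(x)\ge 0$, a contradiction. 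Hence the system is solvable; for any solution $\mu$, set $\lambda:=c+\sum_{j}\mu_j n_j$. Then $\lambda\ge 0$ and $\lambda-c\in V^{\perp}$, so $\langle\lambda,A(x)\rangle=\langle c,A(x)\rangle=\gamma(x)$ for all $x\in W$, which is precisely $\gamma=\lambda_1\alpha_1+\dots+\lambda_m\alpha_m$.

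The only delicate point will be the bookkeeping in the last step: invoking the finite-dimensional alternative theorem in exactly the right form and correctly translating the orthogonality conditions $\langle y,n_j\rangle=0$ into the membership $y\in V$ — everything else is routine linear algebra, and the genuine content is this reduction of the arbitrary-domain statement to the finite-dimensional Farkas lemma (the target space $\mathbb{R}^m$ being finite-dimensional is what makes it go through). I would also remark that a self-contained alternative is induction on $m$: the base case $m=1$ is elementary (treating $\alpha_1\equiv 0$ and $\alpha_1\not\equiv 0$ separately, using $\ker\alpha_1\subset\ker\gamma$), and the inductive step again only requires finite-dimensional reasoning once $\gamma$ has been expressed through the $\alpha_i$.
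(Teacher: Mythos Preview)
Your argument is correct. The ``if'' direction is indeed trivial, and for the ``only if'' direction your factorisation through $V=A(W)$ together with the finite-dimensional alternative theorem works exactly as you describe; the bookkeeping you flag as delicate is fine (the system $-N\mu\le c$ has no solution iff there exists $y\ge 0$ with $N^{T}y=0$ and $c^{T}y<0$, which is the form you use).

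However, the paper does not actually prove this lemma: it merely quotes it from Bartl \cite{Bartl_2008}, so there is no in-paper proof to compare against. Bartl's original argument is a short self-contained algebraic proof by induction on $m$ that does \emph{not} invoke the classical finite-dimensional Farkas lemma as a black box. Your main route is thus genuinely different in spirit: you reduce the infinite-domain statement to the classical $\mathbb{R}^m$ result (legitimate, since the target space is finite-dimensional), whereas Bartl proves everything from scratch. The advantage of your approach is that it makes transparent why only the dimension of the target matters; the advantage of Bartl's is that it is free of any appeal to separation/alternative theorems and hence slightly more elementary. Interestingly, the inductive alternative you sketch in your closing remark is precisely the line Bartl takes, so you have in fact outlined both proofs.
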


Finally, let us recall a lemma from \cite{AnYen} which describes the normal cone of the intersection of finitely many affine hyperplanes. The proof of this result has been done by applying Lemma \ref{Farkas_lemma}.

\begin{lemma}\label{lemma2} {\rm (See {\cite[Lemma 5.2]{AnYen}})} Let $X, Y$ be Hausdorff locally convex topological vector spaces.
Let there be given vectors $(x_j^*,y_j^*)\in X^*\times Y^*$ and real numbers  $\alpha_j \in \Bbb{R},\ j=1,\dots,k $. Set
$$Q_j=\left\{ (x,y)\in X\times Y \mid \langle (x_j^*,y_j^*), (x,y) \rangle = \alpha_j\right\}.$$ Then, for each  $(\bar x, \bar y)\in  \bigcap\limits_{j=1}^k Q_j$, it holds that
\begin{align}\label{formulanon}
N\left( (\bar x, \bar y); \bigcap\limits_{j=1}^k Q_j\right)={\rm{span}}\{ (x_j^*, y_j^*) \mid j =1,\dots,k \},
\end{align}
where ${\rm{span}}\{ (x_j^*, y_j^*) \mid j =1,\dots,k\}$ denotes the linear subspace generated by the vectors $(x_j^*, y_j^*)$, $j =1,\dots,k$.
\end{lemma}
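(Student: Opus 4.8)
The plan is to establish the two inclusions in~\eqref{formulanon} separately. The inclusion ``$\supseteq$'' is elementary; the nontrivial one, ``$\subseteq$'', will be reduced to the algebraic Farkas lemma (Lemma~\ref{Farkas_lemma}), exactly as suggested by the paragraph preceding the statement.

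First I would record the affine structure of $Q:=\bigcap_{j=1}^k Q_j$. For every $(x,y)\in Q$ and every $j$ one has $\langle (x_j^*,y_j^*),(x,y)-(\bar x,\bar y)\rangle=\alpha_j-\alpha_j=0$; hence, putting $L:=Q-(\bar x,\bar y)$, the set $L$ is precisely the linear subspace $\{(x,y)\in X\times Y\mid \langle(x_j^*,y_j^*),(x,y)\rangle=0,\ j=1,\dots,k\}$. The inclusion ``$\supseteq$'' is then immediate: if $(z^*,w^*)=\sum_{j=1}^k\lambda_j(x_j^*,y_j^*)$ with $\lambda_j\in\Bbb{R}$, then $\langle(z^*,w^*),v\rangle=0$ for every $v\in L$, so $\langle(z^*,w^*),(x,y)-(\bar x,\bar y)\rangle=0\le 0$ for all $(x,y)\in Q$, i.e. $(z^*,w^*)\in N((\bar x,\bar y);Q)$.

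For the reverse inclusion, take $(z^*,w^*)\in N((\bar x,\bar y);Q)$, so that $\langle(z^*,w^*),v\rangle\le 0$ for all $v\in L$. Since $L$ is a subspace we also have $-v\in L$, whence $\langle(z^*,w^*),v\rangle=0$ for every $v\in L$. Next I would split each equality constraint defining $L$ into two inequalities, writing $v\in L$ equivalently as $\langle(x_j^*,y_j^*),v\rangle\le 0$ and $\langle-(x_j^*,y_j^*),v\rangle\le 0$ for $j=1,\dots,k$. Thus the inequality $\langle(z^*,w^*),v\rangle\le 0$ is a consequence of this system of $2k$ homogeneous linear inequalities on the real vector space $W:=X\times Y$. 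Applying Lemma~\ref{Farkas_lemma} with $\gamma=(z^*,w^*)$ and with the $2k$ linear functionals $(x_1^*,y_1^*),\dots,(x_k^*,y_k^*),-(x_1^*,y_1^*),\dots,-(x_k^*,y_k^*)$, I obtain nonnegative reals $\mu_1,\dots,\mu_k,\nu_1,\dots,\nu_k$ such that
$$(z^*,w^*)=\sum_{j=1}^k\mu_j(x_j^*,y_j^*)-\sum_{j=1}^k\nu_j(x_j^*,y_j^*)=\sum_{j=1}^k(\mu_j-\nu_j)(x_j^*,y_j^*)\in{\rm{span}}\{(x_j^*,y_j^*)\mid j=1,\dots,k\}.$$
Together with the first inclusion, this proves~\eqref{formulanon}.

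I expect the only genuinely delicate point to be the reformulation that makes Lemma~\ref{Farkas_lemma} applicable: one has to observe that, $Q$ being affine, its normal cone at any point of $Q$ is exactly the annihilator of the homogeneous subspace $L$, and that an equality constraint is equivalent to a pair of ``$\le$'' constraints. No topological argument is needed --- Bartl's Farkas lemma holds for arbitrary linear functionals on a real vector space, so it handles the (finite) linear combination without any closedness or reflexivity hypothesis --- and once the problem is cast in this form the conclusion follows mechanically.
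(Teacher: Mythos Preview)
Your proof is correct and follows exactly the approach indicated in the paper: reduce the normal cone of the affine set $Q$ to the annihilator of the translated linear subspace $L$, split each equality into two inequalities, and apply Bartl's algebraic Farkas lemma (Lemma~\ref{Farkas_lemma}) to obtain the coefficients $\lambda_j=\mu_j-\nu_j$. The paper does not reproduce the argument here but refers to \cite[Lemma~5.2]{AnYen}, where precisely this Farkas-lemma route is used.
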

\section{Optimality conditions}
\markboth{\centerline{\it Optimality conditions}}{\centerline{\it D.T.V.~An
and N.D.~Yen}} 

Optimality conditions for convex optimization problems, which can be derived from the calculus rules of convex analysis, have been presented in many books and research papers. To make our paper self-contained and easy for reading, we are going to present systematically some optimality conditions for convex programs under inclusion constraints and for convex optimization problems under geometrical and functional constraints. Observe that these conditions lead to certain Lagrange multiplier sets which are used in our subsequent differential stability analysis of parametric convex programs.

	Let $X$ and $Y$ be Hausdorff locally convex topological vector spaces. Let $\varphi: X \times Y \rightarrow \overline{\mathbb{R}}$ be a proper convex extended-real-valued function.
 
\subsection{Problems under inclusion constraints}
 Given a convex multifunction $G: X \rightrightarrows Y$, we consider the \textit{parametric convex optimization problem under an inclusion constraint}
			\begin{align*}
 (P_x)	\quad \quad \quad  	\min\{\varphi(x,y)\mid y \in G(x)\} 
		\end{align*}
	 depending on the parameter $x$. The \textit{optimal value function}
	 $\mu: X \rightarrow \overline{\mathbb{R}}$ of problem $(P_x)$ is
	 \begin{align}\label{marginalfunction}
	 \mu(x):= \inf \left\{\varphi (x,y)\mid y \in G(x)\right\}.
	 \end{align}
	 The usual convention $\inf \emptyset =+\infty$ forces $\mu(x)=+\infty$ for every $x \notin {\rm{dom}}\, G.$ 
	 The \textit{solution map}  $M: {\rm {dom}}\, G  \rightrightarrows Y $ of that problem is defined by
		\begin{align}\label{solution_map}
		M(x):=\{y \in G(x)\mid \mu(x)= \varphi (x,y)\}.
		\end{align}
		
The next theorem describes some necessary and sufficient optimality conditions for $(P_x)$ at a given parameter $\bar x \in X$. 

\begin{theorem}\label{MR_version} Let $\bar x \in X$. Suppose that at least one of the following regularity conditions is satisfied:
\par {\rm{(a)}} $ {\rm int }\, G(\bar x) \cap {\rm{dom}}\, \varphi(\bar x, .) \not= \emptyset,$
				\par {\rm{(b)}} $\varphi (\bar x, .)$ is continuous at a point belonging to $G(\bar x).$\\
Then, one has $\bar y\in M(\bar {x})$ if and only if 
\begin{align} \label{NS_condition}
0 \in \partial_y \varphi(\bar x, \bar y) + N(\bar y; G(\bar x)).
\end{align}
\end{theorem}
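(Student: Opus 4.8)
The plan is to reduce the optimality condition to an unconstrained subdifferential condition and then apply the Moreau--Rockafellar Theorem (Theorem~\ref{MoreauRockafellar}). First I would fix $\bar x$ and observe that $\bar y \in M(\bar x)$ means, by definition of the solution map \eqref{solution_map} and the optimal value function \eqref{marginalfunction}, that $\bar y$ is a global minimizer over $Y$ of the function $y \mapsto \varphi(\bar x, y) + \iota_{G(\bar x)}(y)$. Since both $\varphi(\bar x, \cdot)$ and $\iota_{G(\bar x)}(\cdot)$ are proper convex functions on $Y$ (the latter because $G$ is a convex multifunction, so $G(\bar x)$ is a convex set, and $\bar y \in G(\bar x)$ guarantees properness of the sum at $\bar y$), the Fermat rule for convex functions gives that $\bar y$ minimizes this sum if and only if $0 \in \partial\bigl(\varphi(\bar x, \cdot) + \iota_{G(\bar x)}\bigr)(\bar y)$.

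Next I would compute that subdifferential by splitting the sum. By the Moreau--Rockafellar Theorem, $\partial\bigl(\varphi(\bar x, \cdot) + \iota_{G(\bar x)}\bigr)(\bar y) = \partial_y\varphi(\bar x, \bar y) + \partial\iota_{G(\bar x)}(\bar y)$ provided one of the two summands is continuous at a common point of the two domains; and by the identity $\partial\iota_\Omega(\cdot) = N(\cdot;\Omega)$ recalled in the Preliminaries, $\partial\iota_{G(\bar x)}(\bar y) = N(\bar y; G(\bar x))$. Here regularity condition (b) is exactly the hypothesis that $\varphi(\bar x, \cdot)$ is continuous at some point of $G(\bar x) = \operatorname{dom}\iota_{G(\bar x)}$, so the exact sum rule applies directly. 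Under regularity condition (a), I would instead note that $\iota_{G(\bar x)}$ is continuous on $\operatorname{int} G(\bar x)$ and that condition (a) provides a point in $\operatorname{int} G(\bar x) \cap \operatorname{dom}\varphi(\bar x, \cdot)$, i.e.\ a point where $\iota_{G(\bar x)}$ is continuous lying in $\operatorname{dom}\varphi(\bar x, \cdot)$; this is again a valid instance of the hypothesis of Theorem~\ref{MoreauRockafellar} (all functions but one continuous at a common domain point). Combining, $0 \in \partial_y\varphi(\bar x, \bar y) + N(\bar y; G(\bar x))$, which is \eqref{NS_condition}.

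The main subtlety to handle carefully is the direction in which regularity is actually needed. The inclusion $\partial_y\varphi(\bar x, \bar y) + N(\bar y; G(\bar x)) \subset \partial(\varphi(\bar x,\cdot) + \iota_{G(\bar x)})(\bar y)$ always holds (the easy half of Moreau--Rockafellar), so the implication ``\eqref{NS_condition} $\Rightarrow$ $\bar y \in M(\bar x)$'' requires no regularity at all: if $0$ lies in the sum of the subdifferentials it lies in the subdifferential of the sum, hence $\bar y$ is a minimizer. Regularity conditions (a) or (b) are needed only for the reverse implication ``$\bar y \in M(\bar x) \Rightarrow$ \eqref{NS_condition}'', where one must pull the $0$ in $\partial(\varphi(\bar x,\cdot)+\iota_{G(\bar x)})(\bar y)$ back through the exact sum formula. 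I expect the only real care in writing this up to be the verification, in case (a), that ``$\operatorname{int} G(\bar x) \cap \operatorname{dom}\varphi(\bar x,\cdot) \neq \emptyset$'' translates precisely into the continuity-at-a-common-point hypothesis of Theorem~\ref{MoreauRockafellar} for the pair $\{\varphi(\bar x,\cdot), \iota_{G(\bar x)}\}$, using the characterization of continuity of convex functions on the interior of their domain recalled above.
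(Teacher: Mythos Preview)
Your proposal is correct and follows essentially the same route as the paper: reduce to the Fermat rule for the sum $\varphi(\bar x,\cdot)+\iota_{G(\bar x)}$, then invoke Theorem~\ref{MoreauRockafellar} under (a) or (b) and use $\partial\iota_{G(\bar x)}(\bar y)=N(\bar y;G(\bar x))$. Your additional remark that only the implication ``$\bar y\in M(\bar x)\Rightarrow\eqref{NS_condition}$'' actually requires the regularity condition (the reverse implication following from the easy inclusion in Theorem~\ref{MoreauRockafellar}) is a correct refinement not made explicit in the paper.
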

\begin{proof}
Consider the function $\varphi_G(y)= \varphi (\bar x, y) + \iota_{G(\bar x)}( y)$, where $\iota_{G(\bar x)}(\cdot)$ is the indicator function of the convex set $G(\bar x)$. The latter means that $\iota_{G(\bar x)}(y)=0$ for $y\in G(\bar x)$ and $\iota_{G(\bar x)}(y)=+\infty$ for $y\notin G(\bar x)$. It is clear that $\bar y \in  M(\bar x)$ if and only if the function $\varphi_G$ attains its minimum at $\bar y$. Hence, by \cite[Proposition~1, p. 81]{Ioffe_Tihomirov_1979}, $\bar y \in  M(\bar x)$ if and only if 
\begin{align}\label{NS_condition1}
0 \in \partial \varphi_G(\bar y)=\partial\bigg(\varphi (\bar x, .)+\iota_{G(\bar x)}(.)\bigg )(\bar y).
\end{align}
Since $G(\bar x)$ is convex, $\iota_{G(\bar x)}(\cdot)$ is convex. Clearly, $\iota_{G(\bar x)}(\cdot)$ is continuous at every point belonging to $ {\rm int}\,G(\bar x)$. Thus, if the regularity condition (a) is fulfilled, then $\iota_{G(\bar x)}(\cdot)$ is continuous at a point in ${\rm dom}\, \varphi(\bar x, .)$. By Theorem~\ref{MoreauRockafellar}, from \eqref{NS_condition1} one has
\begin{align*}
0 \in \partial\bigg(\varphi (\bar x, .)+\iota_{G(\bar x)}(.)\bigg)(\bar y)&=\partial_y \varphi(\bar x, \bar y)+ \partial\iota_{G(\bar x)}(\bar y)\\
&=\partial_y \varphi(\bar x, \bar y) + N(\bar y; G(\bar x)).
\end{align*}
Consider the case where (b) holds. Since ${\rm dom}\,\iota_{G(\bar x)}(\cdot)= G(\bar x),$ $\varphi(\bar x,.)$ is continuous at a point in ${\rm dom}\,{\iota_{G(\bar x )}(\cdot)}$. Then, by Theorem \ref{MoreauRockafellar} one can obtain~\eqref{NS_condition} from~\eqref{NS_condition1}.
	$\hfill\Box$
\end{proof}

The sum rule in Theorem \ref{Aubin_sumrule} allows us to get the following result.

\begin{theorem}
Let $X,Y$ be Banach spaces, $\varphi: X \times Y \rightarrow \overline{\mathbb{R}}$ a proper, closed, convex function. Suppose that $G:X \rightrightarrows Y$ is a convex multifunction, whose graph is closed. Let $\bar x \in X$ be such that the regularity condition 
\begin{align}\label{Aubin_RC}
0\in {\rm int}\, \big ({\rm dom}\, \varphi (\bar x,.) - G(\bar x) \big )
\end{align}
is satisfied. Then, $\bar y \in  M(\bar x)$ if and only if 
\begin{align} \label{NS_condition3}
0 \in \partial_y \varphi(\bar x, \bar y) + N(\bar y; G(\bar x)).
\end{align}	
\end{theorem}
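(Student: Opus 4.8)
The plan is to follow verbatim the scheme of the proof of Theorem~\ref{MR_version}, only replacing the appeal to the Moreau--Rockafellar Theorem~\ref{MoreauRockafellar} by the Aubin-type sum rule of Theorem~\ref{Aubin_sumrule}. So I would again set $\varphi_G(y)=\varphi(\bar x,y)+\iota_{G(\bar x)}(\cdot)(y)$, observe that $\bar y\in M(\bar x)$ holds if and only if $\varphi_G$ attains its minimum at $\bar y$, and hence, by the Fermat-type rule \cite[Proposition~1, p.~81]{Ioffe_Tihomirov_1979}, if and only if $0\in\partial\varphi_G(\bar y)$. The whole issue then is to justify the sum formula $\partial\varphi_G(\bar y)=\partial_y\varphi(\bar x,\bar y)+N(\bar y;G(\bar x))$, for which I intend to invoke Theorem~\ref{Aubin_sumrule} with $f=\varphi(\bar x,\cdot)$ and $g=\iota_{G(\bar x)}(\cdot)$, using $\partial\iota_{G(\bar x)}(\cdot)(\bar y)=N(\bar y;G(\bar x))$ as recorded in Section~2.

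The work lies in checking the hypotheses of Theorem~\ref{Aubin_sumrule}. Convexity of $f$ and $g$ is immediate ($\varphi(\bar x,\cdot)$ is a partial function of the convex $\varphi$, and $G(\bar x)$ is convex because $\mathrm{gph}\,G$ is). For closedness I would note that $\mathrm{epi}\,\varphi(\bar x,\cdot)$ is the preimage of the closed set $\mathrm{epi}\,\varphi$ under the continuous map $(y,\alpha)\mapsto(\bar x,y,\alpha)$, hence closed, and that $G(\bar x)$ is the preimage of the closed set $\mathrm{gph}\,G$ under $y\mapsto(\bar x,y)$, hence a closed convex set, so $\iota_{G(\bar x)}(\cdot)$ is closed. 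Properness of both functions I would deduce from the regularity condition~\eqref{Aubin_RC}: since $0\in{\rm int}\big({\rm dom}\,\varphi(\bar x,\cdot)-G(\bar x)\big)$, in particular $0\in{\rm dom}\,\varphi(\bar x,\cdot)-G(\bar x)$, whence ${\rm dom}\,\varphi(\bar x,\cdot)\cap G(\bar x)\ne\emptyset$; combined with $\varphi>-\infty$ (valid because $\varphi$ is proper), this gives that $f$ and $g$ are proper and, in addition, that $\mu(\bar x)$ is finite. Note also that~\eqref{Aubin_RC} is exactly the regularity condition~\eqref{Regularity_condition} written for this pair $f,g$.

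It remains to organize the conclusion by cases on $\bar y$. If $\bar y\notin G(\bar x)\cap{\rm dom}\,\varphi(\bar x,\cdot)$, then $\bar y\notin M(\bar x)$ (either $\bar y\notin G(\bar x)$, or $\bar y\in G(\bar x)$ but $\varphi(\bar x,\bar y)=+\infty>\mu(\bar x)$), and simultaneously the right-hand side of~\eqref{NS_condition3} fails because at least one of $\partial_y\varphi(\bar x,\bar y)$ and $N(\bar y;G(\bar x))$ is empty; so both sides of the asserted equivalence are false. If $\bar y\in G(\bar x)\cap{\rm dom}\,\varphi(\bar x,\cdot)=\mathrm{dom}\,\varphi_G$, then $\bar y\in({\rm dom}\,f)\cap({\rm dom}\,g)$, Theorem~\ref{Aubin_sumrule} applies and yields $\partial\varphi_G(\bar y)=\partial_y\varphi(\bar x,\bar y)+N(\bar y;G(\bar x))$, so the equivalence $\bar y\in M(\bar x)\Leftrightarrow 0\in\partial\varphi_G(\bar y)$ becomes exactly~\eqref{NS_condition3}. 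The only genuinely delicate point in the argument is the bookkeeping for properness and closedness of $f$ and $g$ that is required to legitimately invoke Theorem~\ref{Aubin_sumrule}; once this is in place, the rest is a routine transcription of the proof of Theorem~\ref{MR_version}.
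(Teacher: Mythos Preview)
Your proposal is correct and follows exactly the approach indicated in the paper: reduce to the Fermat rule for $\varphi_G=\varphi(\bar x,\cdot)+\iota_{G(\bar x)}$ and invoke Theorem~\ref{Aubin_sumrule} in place of Theorem~\ref{MoreauRockafellar}, with $Y$, $\varphi(\bar x,\cdot)$, and $\iota_{G(\bar x)}$ playing the roles of $X$, $f$, and $g$. You in fact supply more detail than the paper's one-line proof---the closedness of $\varphi(\bar x,\cdot)$ and $G(\bar x)$ via preimage arguments, properness from~\eqref{Aubin_RC}, and the case split on whether $\bar y\in G(\bar x)\cap{\rm dom}\,\varphi(\bar x,\cdot)$---all of which is sound (with the minor caveat that~\eqref{Aubin_RC} only yields $\mu(\bar x)<+\infty$, not finiteness from below, but you only use the former).
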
 
\begin{proof}
The proof is similar to that of Theorem \ref{MR_version}. Namely, if the regularity condition \eqref{Aubin_RC} is fulfilled, then instead of Theorem~\ref{MoreauRockafellar} we can apply Theorem~\ref{Aubin_sumrule} to the case where $X \times Y$, $\varphi(\bar x,.)$, and ${\iota_{G(\bar x)}(\cdot)}$, respectively, play the roles of $X$, $f$, and $g$. 
$\hfill\Box$
\end{proof}

\subsection{Problems under geometrical and functional constraints}
We now study optimality conditions for \textit{convex optimization problems under geometrical and functional constraints}. Consider the program
\begin{align*}(\Tilde{P}_x)	\quad \ \;  \min \left\{\varphi (x,y) \mid y\in C(x),\ g_i(x,y) \le 0, \ i \in I,\  h_j(x,y)=0,\  j\in J \right\} \end{align*}
depending on parameter $x$, where $g_i:X\times Y\to \mathbb{R}$, $i\in I:=\{1,\dots,m\}$, are continuous convex functions, $h_j:X\times Y\to \mathbb{R}$, $j\in J:=\{1,\dots,k\}$, are continuous  affine  functions,  and $C(x):=\{y \in Y: (x,y) \in C \}$ with $C \subset X \times Y$ being a convex set.  For each $x \in X$, we put
\begin{align}\label{constraint}
  G(x)=\left\{y \in Y \mid y\in C(x),\ g(x,y) \le 0, \ h(x,y)=0\right\},
\end{align} 
where $$g(x,y):=(g_1(x,y),\dots,g_m(x,y))^T,\ \; h(x,y):=(h_1(x,y),\dots,h_k(x,y))^T,$$ with $^T$ denoting matrix transposition, and the inequality $z\le w$ between two vectors in $\mathbb{R}^m$ means that every coordinate of $z$ is less than or equal to the corresponding coordinate of $w$. It is easy to show that the multifunction  $G(\cdot)$ given by \eqref{constraint} is convex. Fix a point $\bar x\in X$ and recall that \begin{align}\label{C_x_bar}
C(\bar x)=\{y\in Y\mid (\bar x,y)\in C\}.
\end{align}

 The next lemma describes the normal cone to a sublevel set of a convex function. 
 
 \begin{lemma}\label{lemma1} {\rm (See \cite[Proposition 2 on p.~206]{Ioffe_Tihomirov_1979})}
 Let $f$ be a proper convex function on $X$, which is continuous at a point $x_0\in X$. Assume that the inequality $f(x_1)<f(x_0)=\alpha_0$ holds for some $x_1\in X$. Then, 
 \begin{align}\label{formula_for_normal_cone}
 N(x_0;[f \le \alpha_0])=K_{\partial f(x_0)},
 \end{align}
 where $[f \le \alpha_0]:= \{ x \mid f(x) \le \alpha_0\}$ is a sublevel set of $f$ and 
 $$ K_{\partial f(x_0)}:=\{u^* \in X^* \mid u^*=\lambda x^*,\  \lambda \ge 0,\ x^*\in \partial f(x_0)\}$$ is the cone generated by the subdifferential of $f$ at $x_0$.
 \end{lemma}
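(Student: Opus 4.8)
The plan is to prove the two inclusions separately, after the harmless normalization $\phi:=f-\alpha_0$, for which $[\phi\le0]=[f\le\alpha_0]$, $\partial\phi(x_0)=\partial f(x_0)$, $\phi$ is continuous at $x_0$, $\phi(x_0)=0$, and $\phi(x_1)<0$; thus it suffices to establish $N(x_0;[\phi\le0])=K_{\partial\phi(x_0)}$ in the case $\phi(x_0)=0$. The inclusion $K_{\partial\phi(x_0)}\subseteq N(x_0;[\phi\le0])$ is immediate: if $u^*=\lambda x^*$ with $\lambda\ge0$ and $x^*\in\partial\phi(x_0)$, then for every $x$ with $\phi(x)\le0$ one has $\langle u^*,x-x_0\rangle=\lambda\langle x^*,x-x_0\rangle\le\lambda(\phi(x)-\phi(x_0))=\lambda\phi(x)\le0$.

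For the reverse inclusion, the key observation is the identity $[\phi\le0]\times\{0\}=(X\times\{0\})\cap{\rm epi}\,\phi$ in $X\times\mathbb{R}$. Since the normal cone of a product of convex sets is the product of the normal cones and $N(0;\{0\})=\mathbb{R}$, we get $N\big((x_0,0);[\phi\le0]\times\{0\}\big)=N(x_0;[\phi\le0])\times\mathbb{R}$; hence $u^*\in N(x_0;[\phi\le0])$ if and only if $(u^*,\beta)\in N\big((x_0,0);(X\times\{0\})\cap{\rm epi}\,\phi\big)$ for some $\beta\in\mathbb{R}$. I would then apply the intersection formula (Proposition \ref{intersection_formula}) with $A_1=X\times\{0\}$ and $A_2={\rm epi}\,\phi$. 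Its hypothesis is that $A_1\cap({\rm int}\,A_2)\ne\emptyset$, i.e.\ that there is a point $(x,0)$ with $x\in{\rm int}({\rm dom}\,\phi)$ and $\phi(x)<0$; such a point exists because continuity of $\phi$ at $x_0$ gives $x_0\in{\rm int}({\rm dom}\,\phi)$, so the half-open segment $[x_0,x_1)$ lies in ${\rm int}({\rm dom}\,\phi)$, while convexity yields $\phi\big(x_0+t(x_1-x_0)\big)\le t\,\phi(x_1)<0$ for $t\in(0,1)$. The formula then gives
$$N(x_0;[\phi\le0])\times\mathbb{R}=N\big((x_0,0);X\times\{0\}\big)+N\big((x_0,0);{\rm epi}\,\phi\big)=(\{0\}\times\mathbb{R})+N\big((x_0,\phi(x_0));{\rm epi}\,\phi\big),$$
so $u^*\in N(x_0;[\phi\le0])$ if and only if $(u^*,\beta)\in N\big((x_0,\phi(x_0));{\rm epi}\,\phi\big)$ for some $\beta\in\mathbb{R}$.

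To finish I would use the description of the normal cone to an epigraph recalled above. Testing with the rays $\{x_0\}\times[\phi(x_0),\infty)$ shows $\beta\le0$. If $\beta<0$, rescaling by $1/(-\beta)>0$ gives $(u^*/(-\beta),-1)\in N\big((x_0,\phi(x_0));{\rm epi}\,\phi\big)$, i.e.\ $u^*/(-\beta)\in\partial\phi(x_0)$, whence $u^*=(-\beta)\big(u^*/(-\beta)\big)\in K_{\partial\phi(x_0)}$. If $\beta=0$, then $u^*\in\partial^\infty\phi(x_0)=N(x_0;{\rm dom}\,\phi)=\{0\}$, the last equality because $x_0\in{\rm int}({\rm dom}\,\phi)$; and $0\in K_{\partial\phi(x_0)}$ since $\partial\phi(x_0)\ne\emptyset$ by continuity of $\phi$ at $x_0$. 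In all cases $u^*\in K_{\partial\phi(x_0)}=K_{\partial f(x_0)}$, which completes the proof.

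The step I expect to be the main obstacle is the honest verification of the interior/Slater condition required to invoke Proposition \ref{intersection_formula} — namely that $x_1$ can be pulled into ${\rm int}({\rm dom}\,\phi)$ along the segment towards $x_0$ while keeping $\phi$ strictly negative — together with careful bookkeeping of the spurious $\{0\}\times\mathbb{R}$ factor produced by the trivial slot $\{0\}$. As an alternative that sidesteps the epigraphical normal-cone calculus, one may prove the reverse inclusion directly by Hahn--Banach separation: the convex set ${\rm epi}\,\phi$ and the nonempty open convex set $\{(x,r)\mid\langle u^*,x-x_0\rangle>0,\ r<0\}$ are disjoint precisely because $u^*\in N(x_0;[\phi\le0])$, and analysing the separating functional — with the condition $\phi(x_1)<0$ used to exclude the degenerate case — yields $\lambda u^*\in\partial\phi(x_0)$ for some $\lambda>0$.
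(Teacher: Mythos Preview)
The paper does not supply its own proof of this lemma; it is quoted as a known result from Ioffe--Tihomirov with a bare citation. So there is no ``paper's proof'' to compare against, and the relevant question is simply whether your argument stands on its own.

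It does. The easy inclusion is handled correctly. For the nontrivial inclusion, your device of rewriting $[\phi\le 0]\times\{0\}=(X\times\{0\})\cap{\rm epi}\,\phi$ and then invoking Proposition~\ref{intersection_formula} is clean and fully justified: the Slater-type hypothesis $A_1\cap{\rm int}\,A_2\neq\emptyset$ is exactly the existence of a point $x$ with $x\in{\rm int}({\rm dom}\,\phi)$ and $\phi(x)<0$, and your segment argument produces such a point (the line-segment principle ``$x_0\in{\rm int}\,C$, $x_1\in C$ $\Rightarrow$ $[x_0,x_1)\subset{\rm int}\,C$'' is valid in any topological vector space). The subsequent bookkeeping with the $\{0\}\times\mathbb{R}$ summand is correct: after cancelling that free real coordinate one is left with the equivalence ``$u^*\in N(x_0;[\phi\le 0])$ iff $(u^*,\beta)\in N\big((x_0,\phi(x_0));{\rm epi}\,\phi\big)$ for some $\beta$''. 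The case split $\beta<0$ versus $\beta=0$ then finishes the job, using $\partial^\infty\phi(x_0)=N(x_0;{\rm dom}\,\phi)=\{0\}$ at an interior point and the nonemptiness of $\partial\phi(x_0)$ at a point of continuity.

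Your self-identified ``main obstacle'' is therefore not an obstacle at all; the verification you sketch is already complete. The alternative separation argument you outline at the end is essentially the route taken in Ioffe--Tihomirov, so both approaches are standard and your epigraphical one has the virtue of staying entirely within the toolkit the present paper has already assembled.
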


Optimality conditions for convex optimization problems under geometrical and functional constraints can be formulated as follows.

\begin{theorem} If $\varphi(\bar x, .)$ is continuous at a point $y^0\in {\rm int}\,C(\bar x)$, $g_i(\bar x,y^0)<0$ for all $i\in I$ and $h_j(\bar x,y^0)=0$ for all $j\in J$, then for a point $\bar y \in G(\bar x)$ to be a solution of $(\Tilde{P}_{\bar x})$, it is necessary and sufficient that there exist $\lambda_i \ge 0,$ $i\in I,$ and $\mu_j \in \Bbb{R},$ $j\in J,$ such that
 \\ {\rm {(a)}}  $0 \in \partial_y \varphi(\bar x,\bar y)+ \sum\limits_{i\in I} \lambda_i \partial_y g_i(\bar x, \bar y)+\sum\limits_{j\in J}\mu_j \partial_y h_j(\bar x, \bar y)+ N(\bar y;C(\bar x)); $
 \\ {\rm {(b)}} $\lambda_ig_i (\bar x, \bar y)=0, \ i\in I.$
 \end{theorem}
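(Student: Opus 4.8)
My plan is to deduce the stated Karush--Kuhn--Tucker conditions from the inclusion-constrained optimality criterion of Theorem~\ref{MR_version}, applied with $G$ given by~\eqref{constraint}, and then to expand the normal cone $N(\bar y;G(\bar x))$ in terms of the data $C$, $g_i$, $h_j$. First I would check that $y^0\in G(\bar x)$: indeed $y^0\in{\rm int}\,C(\bar x)\subset C(\bar x)$, $g_i(\bar x,y^0)<0\le 0$ for all $i\in I$, and $h_j(\bar x,y^0)=0$ for all $j\in J$. Since $\varphi(\bar x,\cdot)$ is continuous at $y^0\in G(\bar x)$, regularity condition~(b) of Theorem~\ref{MR_version} is fulfilled; moreover the feasible set of $(\Tilde{P}_{\bar x})$ is precisely $G(\bar x)$ with objective $\varphi(\bar x,\cdot)$, so $\bar y\in G(\bar x)$ solves $(\Tilde{P}_{\bar x})$ if and only if $\bar y\in M(\bar x)$, which by Theorem~\ref{MR_version} is equivalent to $0\in\partial_y\varphi(\bar x,\bar y)+N(\bar y;G(\bar x))$.

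It then remains to compute $N(\bar y;G(\bar x))$. Put $A_i:=\{y\in Y\mid g_i(\bar x,y)\le 0\}$ for $i\in I$, $Q_j:=\{y\in Y\mid h_j(\bar x,y)=0\}$ for $j\in J$, and $Q:=\bigcap_{j\in J}Q_j$, so that $G(\bar x)=C(\bar x)\cap\big(\bigcap_{i\in I}A_i\big)\cap Q$, whence $\iota_{G(\bar x)}=\iota_{C(\bar x)}+\sum_{i\in I}\iota_{A_i}+\iota_Q$. All these indicator functions are proper convex, since the sets are convex and contain $y^0$; furthermore $\iota_{C(\bar x)}$ is continuous at $y^0\in{\rm int}\,C(\bar x)$, and, because $g_i(\bar x,\cdot)$ is continuous with $g_i(\bar x,y^0)<0$, each $\iota_{A_i}$ is continuous at $y^0\in{\rm int}\,A_i$. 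Thus every summand except $\iota_Q$ is continuous at the common point $y^0$, so Theorem~\ref{MoreauRockafellar} and the identity $\partial\iota_S(\bar y)=N(\bar y;S)$ give $N(\bar y;G(\bar x))=N(\bar y;C(\bar x))+\sum_{i\in I}N(\bar y;A_i)+N(\bar y;Q)$.

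Next I would evaluate the individual terms. If $g_i(\bar x,\bar y)<0$, continuity of $g_i(\bar x,\cdot)$ gives $\bar y\in{\rm int}\,A_i$ and hence $N(\bar y;A_i)=\{0\}$; if $g_i(\bar x,\bar y)=0$, then Lemma~\ref{lemma1} applied to $f=g_i(\bar x,\cdot)$ with the point $y^0$ (where $g_i(\bar x,y^0)<0=g_i(\bar x,\bar y)$) yields $N(\bar y;A_i)=K_{\partial_y g_i(\bar x,\bar y)}=\{\lambda x^*\mid\lambda\ge0,\ x^*\in\partial_y g_i(\bar x,\bar y)\}$, with $\partial_y g_i(\bar x,\bar y)\neq\emptyset$ by continuity of $g_i(\bar x,\cdot)$. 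For the equality constraints, each $h_j(\bar x,\cdot)$ is continuous and affine, so $h_j(\bar x,y)=\langle y_j^*,y\rangle-\alpha_j$ with $\partial_y h_j(\bar x,\bar y)=\{y_j^*\}$ and $Q_j=\{y\mid\langle y_j^*,y\rangle=\alpha_j\}$; since $y^0\in Q$, Lemma~\ref{lemma2} (whose proof via the Farkas Lemma~\ref{Farkas_lemma} uses no product structure and therefore applies with the single ambient space $Y$) gives $N(\bar y;Q)={\rm span}\{y_j^*\mid j\in J\}$. Inserting these formulas into the equivalence of the previous paragraph and introducing multipliers $\lambda_i\ge0$ (set equal to $0$ for the inactive indices) and $\mu_j\in\mathbb{R}$, one verifies directly that $0\in\partial_y\varphi(\bar x,\bar y)+N(\bar y;G(\bar x))$ holds if and only if (a) and (b) hold simultaneously: the decomposition of $0$ is exactly the relation in (a), while (b) is automatic since active indices satisfy $g_i(\bar x,\bar y)=0$ and inactive ones carry $\lambda_i=0$. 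Conversely, because (b) forces $\lambda_i=0$ for every inactive index, one has $\lambda_i\partial_y g_i(\bar x,\bar y)\subset N(\bar y;A_i)$ for all $i\in I$ and $\sum_{j\in J}\mu_j\partial_y h_j(\bar x,\bar y)\subset N(\bar y;Q)$, so (a) entails $0\in\partial_y\varphi(\bar x,\bar y)+N(\bar y;G(\bar x))$.

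I expect the treatment of the equality constraints in the splitting step to be the only delicate point: since the affine sets $Q_j$ generally have empty interior, Proposition~\ref{intersection_formula} cannot be used to separate them off, and one is forced to isolate $\iota_Q$ as the single possibly-discontinuous summand in the Moreau--Rockafellar rule and to compute $N(\bar y;Q)$ separately through Lemma~\ref{lemma2}. The remaining arguments --- the active/inactive index split for the $g_i$'s and the passage among the cone $K_{\partial_y g_i(\bar x,\bar y)}$, the span of the $y_j^*$'s, and the multiplier formulation --- are routine.
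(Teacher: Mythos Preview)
Your proposal is correct and follows essentially the same route as the paper: invoke Theorem~\ref{MR_version} to reduce to $0\in\partial_y\varphi(\bar x,\bar y)+N(\bar y;G(\bar x))$, decompose $G(\bar x)$ as the intersection of $C(\bar x)$, the sublevel sets of the $g_i(\bar x,\cdot)$, and the affine set $\bigcap_{j\in J}Q_j$, split the normal cone via Moreau--Rockafellar (the paper invokes the packaged version, Proposition~\ref{intersection_formula}, instead of the sum rule on indicators, but this is the same argument), and then apply Lemma~\ref{lemma1} for the active inequality constraints and Lemma~\ref{lemma2} for the affine part. Your explicit remark that the equality-constraint sets must be grouped into a single possibly-discontinuous summand is exactly the point behind the paper's grouping of the $Q_j$'s before applying the intersection formula.
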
 
 \begin{proof}
 For any $\bar x \in X$, let $\bar y \in G(\bar x)$ be given arbitrarily. Note that $(\Tilde{P}_{\bar x})$ can be written in the form
 $$\min \big\{\varphi(\bar x,y)\mid y \in G(\bar x)\big\}.$$
 If $\varphi(\bar x, .)$ is continuous at a point $y^0$ with $y^0\in {\rm int}\,C(\bar x)$, $g_i(\bar x,y^0)<0$ for all $i\in I$, and $h_j(\bar x,y^0)=0$ for all $j\in J$, then the regularity condition (b) in Theorem~\ref{MR_version} is satisfied. Consequently, $\bar y\in M(\bar x)$ if and only if 
 \begin{align}\label{Fermat}
 0\in \partial_y \varphi (\bar x, \bar y) +N(\bar y; G(\bar x)).
 \end{align}
 
 We now show that 
 \begin{equation}\label{cone_normal1}
 \begin{split}
 N(\bar y; G(\bar x))=\left\{\!\sum\limits_{i\in I(\bar x, \bar y)} \lambda_i \partial_y g_i(\bar x, \bar y)\!+\!\sum\limits_{j\in J}\mu_j \partial_y h_j(\bar x, \bar y)\!+\! N(\bar y;C(\bar x))\right\},
 \end{split}
 \end{equation}
 with $I(\bar x, \bar y):=\{i \mid g_i(\bar x, \bar y)=0, \, i\in I\},$  $\lambda_i \ge 0,\,  i\in I,\, \mu_j \in \Bbb{R}, \, j\in J.$
 First, observe that
 \begin{align}\label{normal_gph}
 G(\bar x)= \left( \bigcap\limits_{i \in I} \Omega_i(\bar x) \right) \cap \left( \bigcap\limits_{j \in J} \mathcal{Q}_i(\bar x) \right)\cap C,
 \end{align}
 where $\Omega_i(\bar x)= \{ y\mid g_i(\bar x,y) \le 0\} (i \in I)$ and $\mathcal{Q}_j(\bar x)= \{ y\mid h_j(\bar x,y) = 0\} (j \in J)$ are convex sets. By our assumptions, we have 
 \begin{align*}
 y^0\in \left( \bigcap\limits_{i \in I}{\rm int\,} \Omega_i(\bar x) \right) \cap \left( \bigcap\limits_{j \in J} \mathcal{Q}_i(\bar x) \right)\cap ({\rm int}\,C).
 \end{align*}
 Therefore, according to Proposition \ref{intersection_formula} and formula \eqref{normal_gph}, one has 
 \begin{align}\label{Normal_cap}
 N(\bar y;G(\bar x))= \sum\limits_{i \in I} N(  \bar y; \Omega_i(\bar x)) + N\left(  \bar y; \bigcap\limits_{j\in J}\mathcal{Q}_j(\bar x) \right)+N(\bar y;C(\bar x)).
 \end{align}
 On one hand, by Lemma \ref{lemma1}, for every $i \in I(\bar x, \bar y)$ we have 
 \begin{align}\label{normal_Omega}
 N(  \bar y; \Omega_i(\bar x))= K_{\partial_y g_i(\bar x, \bar y)}{=\{\lambda_i y^*\mid  \lambda_i \ge 0,\; y^*\in \partial_y g_i(\bar x, \bar y)\}}.
 \end{align}
 On the other hand, according to Lemma \ref{lemma2} and the fact that {$$h_j(x,y) = \langle x_j^*, x\rangle + \langle y_j^*, y\rangle - \alpha_j\quad \big ((x_j^*,y_j^*)\in X^*\times Y^*,\ \alpha_j\in\mathbb R\big),$$} we can assert that
 \begin{align}\label{normal_Q}
 {N\left(  \bar y; \bigcap\limits_{j\in J}\mathcal{Q}_j(\bar x) \right)}={\rm{span}}\{ y^*_j\mid j\in J  \}={\rm{span}}\{ \partial_y h_j(\bar x, \bar y) \mid j \in J\},
 \end{align}
 Combining \eqref{Normal_cap}, \eqref{normal_Omega}, and \eqref{normal_Q}, we obtain \eqref{cone_normal1}. So the assertion of the theorem is valid. $\hfill\Box$
 \end{proof}
 
 \section{Subdifferential Estimates via Multiplier Sets}
 \markboth{\centerline{\it Subdifferential Estimates via Multiplier Sets}}{\centerline{\it D.T.V.~An
 and N.D.~Yen}} 

 The following result on differential stability of convex optimization problems under geometrical and  functional constraints has been obtained in \cite{AnYen}.
 
 \begin{theorem}\label{Thm5.21} {\rm (See \cite[Theorem~5.2]{AnYen})} For every $j\in J$, suppose that
 	$$ h_j(x,y) = \langle (x_j^*,y_j^*), (x,y) \rangle - \alpha_j, \  \, \alpha_j \in \Bbb{R}.$$ If $\varphi$ is continuous at a point $(x^0,y^0)$ with $(x^0,y^0)\in {\rm int}\,C$, $g_i(x^0,y^0)<0,$ for all $i\in I$ and $h_j(x^0,y^0)=0,$ for all $j\in J$, then for any $\bar x \in {\rm{dom}}\, \mu$, with $\mu(\bar x)\neq -\infty$, and for any $\bar y \in M(\bar x)$ we have
 	\begin{align}\label{UpperEstimate1}
 	\partial \mu(\bar x) = \bigcup\limits_{(x^*,y^*) \in \partial \varphi(\bar x, \bar y)} \big\{x^* + \tilde{Q}^*\big\}
 	\end{align}
 	and
 		\begin{align}\label{UpperEstimate2}
 	 	\partial^\infty \mu(\bar x) = \bigcup\limits_{(x^*,y^*) \in \partial^\infty \varphi(\bar x, \bar y)} \big\{x^* + \tilde{Q}^*\big\},
 	 	\end{align}
 	where
 	\begin{align}\label{Q_star}
 	\tilde{Q}^*&:=\bigg\{u^* \in X^* \mid (u^*,-y^*) \in A+N((\bar x,\bar y);C) \bigg\}
 	\end{align}
 	with 
 	\begin{align}\label{sum_A} A:=\sum\limits_{i\in I(\bar x, \bar y)} {\rm{cone}}\, \partial g_i(\bar x, \bar y) + {\rm{span}}\{(x_j^*,y_j^*), \,j \in J\}.
 	\end{align}
 \end{theorem}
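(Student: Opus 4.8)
The plan is to reduce the claim to a normal-cone computation for ${\rm gph}\,G$ followed by the sum rule. I would introduce the auxiliary proper convex function $\psi(x,y):=\varphi(x,y)+\iota_{{\rm gph}\,G}(x,y)$, so that $\mu(x)=\inf_{y\in Y}\psi(x,y)$ is its infimal projection onto $X$. Since $\bar y\in M(\bar x)$ and $\mu(\bar x)\in\mathbb R$, one has $\psi(\bar x,\bar y)=\mu(\bar x)$ and $(\bar x,\bar y)\in{\rm dom}\,\psi$, and a direct argument from the definitions \eqref{subdifferential_convex_analysis} and \eqref{singular_subdifferential_convex_analysis} gives the two marginal-function identities
\[
\partial\mu(\bar x)=\{x^*\in X^*\mid(x^*,0)\in\partial\psi(\bar x,\bar y)\},\qquad
\partial^\infty\mu(\bar x)=\{x^*\in X^*\mid(x^*,0)\in\partial^\infty\psi(\bar x,\bar y)\}.
\]
For the first identity one simply takes the infimum over $y$ of the subgradient inequality for $\psi$ and conversely; for the second one uses $\partial^\infty\psi(\bar x,\bar y)=N((\bar x,\bar y);{\rm dom}\,\psi)$ and $\partial^\infty\mu(\bar x)=N(\bar x;{\rm dom}\,\mu)$ (by \cite[Proposition~4.2]{AnYen}), the equality ${\rm dom}\,\mu=\pi_X({\rm dom}\,\psi)$ (valid because $\mu(x)<+\infty$ forces some $y$ with $\psi(x,y)<+\infty$), and the elementary equivalence $x^*\in N(\bar x;\pi_X(S))\Leftrightarrow(x^*,0)\in N((\bar x,\bar y);S)$ for a convex set $S\ni(\bar x,\bar y)$.

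The core step is to prove $N((\bar x,\bar y);{\rm gph}\,G)=A+N((\bar x,\bar y);C)$ with $A$ as in \eqref{sum_A}. Write
\[
{\rm gph}\,G=C\cap\Big(\bigcap_{i\in I}\Omega_i\Big)\cap Q,\qquad\Omega_i:=\{(x,y)\mid g_i(x,y)\le0\},\quad Q:=\bigcap_{j\in J}\{(x,y)\mid h_j(x,y)=0\}.
\]
The affine hyperplanes $\{h_j=0\}$ have empty interior, so I would group them into the single affine set $Q$ and let $Q$ play the role of the one set in Proposition~\ref{intersection_formula} that is not required to have interior points; the regularity point $(x^0,y^0)$ supplied by the hypotheses satisfies $(x^0,y^0)\in Q\cap({\rm int}\,C)\cap\bigcap_{i\in I}{\rm int}\,\Omega_i$, the last inclusions because $g_i(x^0,y^0)<0$ and $g_i$ is continuous convex. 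Proposition~\ref{intersection_formula} then yields
\[
N((\bar x,\bar y);{\rm gph}\,G)=N((\bar x,\bar y);Q)+\sum_{i\in I}N((\bar x,\bar y);\Omega_i)+N((\bar x,\bar y);C).
\]
Now Lemma~\ref{lemma2}, via $h_j(x,y)=\langle(x_j^*,y_j^*),(x,y)\rangle-\alpha_j$, gives $N((\bar x,\bar y);Q)={\rm span}\{(x_j^*,y_j^*)\mid j\in J\}$; Lemma~\ref{lemma1}, applicable since $g_i(x^0,y^0)<0=g_i(\bar x,\bar y)$ for $i\in I(\bar x,\bar y)$, gives $N((\bar x,\bar y);\Omega_i)={\rm cone}\,\partial g_i(\bar x,\bar y)$ for such $i$, while for $i\notin I(\bar x,\bar y)$ one has $g_i(\bar x,\bar y)<0$, so $(\bar x,\bar y)\in{\rm int}\,\Omega_i$ and $N((\bar x,\bar y);\Omega_i)=\{0\}$. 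Adding these terms gives exactly $A+N((\bar x,\bar y);C)$.

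To conclude \eqref{UpperEstimate1}, note $\varphi$ is continuous at $(x^0,y^0)\in{\rm gph}\,G={\rm dom}\,\iota_{{\rm gph}\,G}$, so the Moreau--Rockafellar theorem (Theorem~\ref{MoreauRockafellar}) applies and, by the previous step, $\partial\psi(\bar x,\bar y)=\partial\varphi(\bar x,\bar y)+N((\bar x,\bar y);{\rm gph}\,G)=\partial\varphi(\bar x,\bar y)+A+N((\bar x,\bar y);C)$. By the first marginal-function identity, $x^*\in\partial\mu(\bar x)$ iff $(x^*,0)\in\partial\psi(\bar x,\bar y)$, i.e.\ iff there exist $(x_0^*,y_0^*)\in\partial\varphi(\bar x,\bar y)$ and $(u^*,v^*)\in A+N((\bar x,\bar y);C)$ with $x^*=x_0^*+u^*$ and $v^*=-y_0^*$; by \eqref{Q_star} the latter says exactly $u^*\in\tilde{Q}^*$ computed at $y^*=y_0^*$, whence $x^*\in x_0^*+\tilde{Q}^*$, and ranging over $\partial\varphi(\bar x,\bar y)$ delivers \eqref{UpperEstimate1} (the chain of equivalences gives both inclusions at once). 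For \eqref{UpperEstimate2} I would repeat the extraction with $\partial^\infty$: using \cite[Proposition~4.2]{AnYen} and Proposition~\ref{intersection_formula} applied to ${\rm dom}\,\psi={\rm dom}\,\varphi\cap{\rm gph}\,G$ (with $(x^0,y^0)\in{\rm int}({\rm dom}\,\varphi)$ by continuity of $\varphi$), one gets $\partial^\infty\psi(\bar x,\bar y)=\partial^\infty\varphi(\bar x,\bar y)+A+N((\bar x,\bar y);C)$, and the same rearrangement proves \eqref{UpperEstimate2}. The main obstacle I anticipate is the bookkeeping forced by the equality constraints: their hyperplanes have empty interior and cannot be fed one-by-one into Proposition~\ref{intersection_formula}, so one must merge them into the affine subspace $Q$ and invoke the infinite-dimensional Farkas lemma (through Lemma~\ref{lemma2}) to evaluate $N((\bar x,\bar y);Q)$, while checking that the single point $(x^0,y^0)$ simultaneously validates the Moreau--Rockafellar sum rule, the intersection formula for ${\rm gph}\,G$, the intersection formula for ${\rm dom}\,\psi$, and Lemma~\ref{lemma1} at every active inequality.
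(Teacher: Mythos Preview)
The paper does not supply its own proof of this statement: Theorem~\ref{Thm5.21} is quoted verbatim from \cite[Theorem~5.2]{AnYen} and used as an input for the subsequent results. So there is no ``paper's proof'' to compare against; your task reduces to whether your argument is internally correct and compatible with the tools the paper makes available.

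Your proof is correct and is exactly the natural route suggested by the paper's own machinery. The reduction $\mu(x)=\inf_y\psi(x,y)$ with $\psi=\varphi+\iota_{{\rm gph}\,G}$ and the two marginal-function identities are standard and your justifications are sound (the forward/backward subgradient inequalities, and the equivalence $x^*\in N(\bar x;\pi_X S)\Leftrightarrow(x^*,0)\in N((\bar x,\bar y);S)$ for convex $S$). The decomposition ${\rm gph}\,G=Q\cap C\cap\bigcap_{i\in I}\Omega_i$, with $Q$ absorbing all the empty-interior hyperplanes, is precisely the device the paper itself uses in the proof of Theorem~3.3 for the section $G(\bar x)$; you have simply carried the same computation one level up to the graph. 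Proposition~\ref{intersection_formula}, Lemma~\ref{lemma1}, and Lemma~\ref{lemma2} apply for the reasons you state, and the Moreau--Rockafellar step is licensed by the continuity of $\varphi$ at $(x^0,y^0)\in{\rm gph}\,G$. Your handling of the singular case via ${\rm dom}\,\psi={\rm dom}\,\varphi\cap{\rm gph}\,G$ and $(x^0,y^0)\in{\rm int}({\rm dom}\,\varphi)$ is also correct; note that here you may either (i) apply Proposition~\ref{intersection_formula} once to the full intersection $Q\cap{\rm dom}\,\varphi\cap C\cap\bigcap_i\Omega_i$, with $Q$ again as the single non-interior set, or (ii) apply it to the two-set intersection ${\rm gph}\,G\cap{\rm dom}\,\varphi$ with ${\rm gph}\,G$ as the non-interior set and then plug in the formula for $N((\bar x,\bar y);{\rm gph}\,G)$ already obtained---both are valid because $(x^0,y^0)$ lies in the interiors of all the remaining sets. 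Your concluding extraction of \eqref{UpperEstimate1}--\eqref{UpperEstimate2} from $(x^*,0)\in\partial\varphi(\bar x,\bar y)+A+N((\bar x,\bar y);C)$ (resp.\ with $\partial^\infty\varphi$) matches exactly the algebra the paper performs in the proof of Theorem~\ref{computing_subdifferential} in the reverse direction.
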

 
 Our aim in this section is to derive formulas for computing or estimating the subdifferential of the optimal value function of $(\tilde{P}_x)$ through suitable multiplier sets.
 
 \medskip
 The \textit{Lagrangian function} corresponding to the parametric problem $(\Tilde{P}_x)$ is
 \begin{align}
 \label{Lagrangian_function}
 L(x,y,\lambda,\mu):= \varphi(x,y) + \lambda^T g(x,y) + \mu^T h(x,y)+{\iota_C((x,y))},
 \end{align}
 where $\lambda=(\lambda_1,\lambda_2,...,\lambda_m)\in \mathbb{R}^m $ and $\mu=(\mu_1,\mu_2,...,\mu_k)\in \mathbb{R}^k.$ For each pair $(x,y)\in X\times Y$, by $\Lambda_0(x,y)$ we denote the set of all the multipliers $\lambda\in \mathbb{R}^m$ and $\mu\in \mathbb{R}^k$ with $\lambda_i\geq 0$ for all $i\in I$ and $\lambda_i=0$ for every $i\in I\setminus I(x,y)$, where $ I(x,y)=\{i\in I\mid g_i(x,y)=0\}$.
 
 \medskip
 For a parameter~$\bar x$, the \textit{Lagrangian function} corresponding to the unperturbed problem  $(\Tilde{P}_{\bar x})$ is 
  \begin{align}
 \label{Lagrangian_function2}
 L(\bar x,y,\lambda,\mu)= \varphi(\bar x, y) + \lambda^T g(\bar x,y) + \mu^T h(\bar x,y)+{\iota_C((\bar x,y))}. 
\end{align} Denote by $\Lambda(\bar x, \bar y)$ the \textit{Lagrange multiplier set} corresponding to an optimal solution $\bar y$ of the problem $(\Tilde{P}_{\bar x})$. Thus, $\Lambda(\bar x, \bar y)$ consists of the pairs $(\lambda, \mu)\in \mathbb{R}^m \times \mathbb{R}^k$ satisfying
    \begin{equation*}\begin{cases}
0 \in \partial _y L(\bar x, \bar y, \lambda, \mu), 
 \\ 
\lambda_i g_i(\bar x, \bar y)=0, \; i=1,\dots,m,\\
 \lambda_i \ge 0,\; i=1,\dots,m,
 \end{cases}
 \end{equation*}
where $\partial_y  L(\bar x, \bar y, \lambda, \mu)$ is the subdifferential of the function $L(\bar x , ., \lambda, \mu)$ defined by \eqref{Lagrangian_function2} at $\bar y$. It is clear that ${\iota_C((\bar x,y))}={\iota_{C(\bar x)}(y)}$, where $C(\bar x)$ has been defined by \eqref{C_x_bar}.

  {Based on the multiplier} set $\Lambda_0(x,y)$, the next theorem provides us with a formula for computing the subdifferential of the optimal value function $\mu(x).$  
    
    \begin{theorem}\label{computing_subdifferential}
Suppose that
   $ h_j(x,y) = \langle (x_j^*,y_j^*), (x,y) \rangle - \alpha_j, \  \alpha_j \in \Bbb{R}, \  j\in J,$ and $M(\bar x)$ is nonempty for some $\bar x \in {\rm dom}\, \mu $. If $\varphi$ is continuous at a point $(x^0,y^0)\in {\rm int}\,C$, $g_i(x^0,y^0)<0$ for all $i\in I$ and $h_j(x^0,y^0)=0$ for all $j\in J$ then, for any $\bar y \in M(\bar x)$, one has    
    	\begin{align}\label{in_estimate}
    	\partial \mu (\bar x)= \left\{ \bigcup\limits_{(\lambda, \mu) \in \Lambda_0(\bar x, \bar y)} {\rm pr}_{X^*}\bigg(\partial L(\bar x, \bar y, \lambda, \mu)\cap \big(X^* \times \{0\}\big)\bigg)\right\},
    	\end{align}
    	where $\partial L(\bar x, \bar y, \lambda, \mu)$ is the subdifferential of the function $L(., ., \lambda, \mu)$ at $(\bar x,\bar y)$ and, for any $(x^*,y^*)\in X^*\times Y^*$, ${\rm pr}_{X^*}(x^*,y^*):=x^*$.
    \end{theorem}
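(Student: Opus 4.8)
The plan is to obtain \eqref{in_estimate} as a reformulation of Theorem~\ref{Thm5.21}, trading the description of $\partial\mu(\bar x)$ through $\partial\varphi(\bar x,\bar y)$ and the set $\tilde Q^*$ of \eqref{Q_star}--\eqref{sum_A} for the announced description through the Lagrangian $L(\cdot,\cdot,\lambda,\mu)$ and the multiplier set $\Lambda_0(\bar x,\bar y)$. First I would note that, since $\bar y\in M(\bar x)$ and $\varphi$ is proper, one has $\mu(\bar x)=\varphi(\bar x,\bar y)\in\mathbb{R}$; hence $\bar x\in{\rm dom}\,\mu$ with $\mu(\bar x)\neq-\infty$, all the hypotheses of Theorem~\ref{Thm5.21} are in force, and that theorem supplies formula \eqref{UpperEstimate1} with $\tilde Q^*$ and $A$ given by \eqref{Q_star} and \eqref{sum_A}. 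This is the only ``external input''; everything that follows is an algebraic manipulation of this identity.

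The second step is to compute $\partial L(\bar x,\bar y,\lambda,\mu)$ for an arbitrary fixed $(\lambda,\mu)\in\Lambda_0(\bar x,\bar y)$. Writing $L(\cdot,\cdot,\lambda,\mu)=\varphi+\sum_{i\in I}\lambda_i g_i+\sum_{j\in J}\mu_j h_j+\iota_C$ on $X\times Y$, all these summands are proper convex functions, and they share a common point of continuity, namely $(x^0,y^0)$: indeed $\varphi$ is continuous there by hypothesis, each $g_i$ and each $h_j$ is continuous on all of $X\times Y$, and $\iota_C$ is continuous at $(x^0,y^0)\in{\rm int}\,C$; moreover $(\bar x,\bar y)$ lies in the common domain because $\bar y\in M(\bar x)\subset G(\bar x)$. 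Theorem~\ref{MoreauRockafellar} then yields
\[
\partial L(\bar x,\bar y,\lambda,\mu)=\partial\varphi(\bar x,\bar y)+\sum_{i\in I}\lambda_i\,\partial g_i(\bar x,\bar y)+\sum_{j\in J}\mu_j(x_j^*,y_j^*)+N\big((\bar x,\bar y);C\big),
\]
where I use $\partial\iota_C((\bar x,\bar y))=N((\bar x,\bar y);C)$ together with the fact that each $h_j$ is affine, so $\partial h_j(\bar x,\bar y)=\{(x_j^*,y_j^*)\}$ and hence $\partial(\mu_j h_j)(\bar x,\bar y)=\{\mu_j(x_j^*,y_j^*)\}$ regardless of the sign of $\mu_j$.

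The third step is the bookkeeping that matches this sum, intersected with $X^*\times\{0\}$ and projected by $\mathrm{pr}_{X^*}$, with formula \eqref{UpperEstimate1}. If $(\lambda,\mu)\in\Lambda_0(\bar x,\bar y)$, then $\lambda_i=0$ for $i\notin I(\bar x,\bar y)$, so (using $\partial g_i(\bar x,\bar y)\neq\emptyset$, valid since $g_i$ is finite and continuous) the sum over $I$ collapses to a sum over $I(\bar x,\bar y)$ whose terms lie in $\mathrm{cone}\,\partial g_i(\bar x,\bar y)$; and, as $\lambda_i$ runs over $[0,\infty)$ and $\mu_j$ over $\mathbb{R}$, these terms sweep out exactly the set $A$ of \eqref{sum_A}. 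The key observation is then that, in a decomposition of an element of $\partial L(\bar x,\bar y,\lambda,\mu)$ as $(x^*,y^*)+(a^*,b^*)+(c^*,d^*)$ with $(x^*,y^*)\in\partial\varphi(\bar x,\bar y)$, $(a^*,b^*)\in A$, $(c^*,d^*)\in N((\bar x,\bar y);C)$, the vanishing of the $Y^*$-component is equivalent to $-y^*=b^*+d^*$, i.e. to $(a^*+c^*,-y^*)\in A+N((\bar x,\bar y);C)$. Hence, for ``$\subset$'', if $(\xi^*,0)\in\partial L(\bar x,\bar y,\lambda,\mu)\cap(X^*\times\{0\})$ then $\xi^*-x^*=a^*+c^*\in\tilde Q^*$, so $\xi^*=x^*+(\xi^*-x^*)\in\partial\mu(\bar x)$ by Theorem~\ref{Thm5.21}; and for ``$\supset$'', given $p^*\in\partial\mu(\bar x)$, Theorem~\ref{Thm5.21} provides $(x^*,y^*)\in\partial\varphi(\bar x,\bar y)$ and $p^*-x^*=a^*+c^*$ with $(a^*,b^*)\in A$ and $(c^*,d^*)\in N((\bar x,\bar y);C)$ for a suitable $(b^*,d^*)$ with $-y^*=b^*+d^*$; reading off from $(a^*,b^*)$ the coefficients $\lambda_i\ge 0$ ($i\in I(\bar x,\bar y)$) and $\mu_j\in\mathbb{R}$, and setting $\lambda_i=0$ for $i\notin I(\bar x,\bar y)$, one gets $(\lambda,\mu)\in\Lambda_0(\bar x,\bar y)$ with $(p^*,0)\in\partial L(\bar x,\bar y,\lambda,\mu)\cap(X^*\times\{0\})$, whence $p^*=\mathrm{pr}_{X^*}(p^*,0)$ lies in the right-hand side of \eqref{in_estimate}.

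I do not expect a genuinely hard step here: the argument is an unwinding of Theorem~\ref{Thm5.21} plus a single application of the Moreau--Rockafellar theorem. The points that require care are (i) checking that $\varphi$, the $\lambda_i g_i$, the $\mu_j h_j$ and $\iota_C$ have a common point of continuity and that $(\bar x,\bar y)$ belongs to every domain, so that the sum rule for $\partial L(\bar x,\bar y,\lambda,\mu)$ holds with equality; (ii) the appearance of the affine constraints $h_j$ with possibly negative multipliers $\mu_j$, which is legitimate precisely because $\partial(\mu_j h_j)$ is the singleton $\{\mu_j(x_j^*,y_j^*)\}$ for any sign of $\mu_j$; and (iii) the exact identification, after the union over $\Lambda_0(\bar x,\bar y)$, of $\bigcup_{\lambda_i\ge 0}\lambda_i\partial g_i(\bar x,\bar y)$ with $\mathrm{cone}\,\partial g_i(\bar x,\bar y)$, which relies on $\partial g_i(\bar x,\bar y)\neq\emptyset$. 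This last piece of bookkeeping is the most delicate part of the proof, and it is where I would be most careful.
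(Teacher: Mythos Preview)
Your proposal is correct and follows essentially the same route as the paper: invoke Theorem~\ref{Thm5.21}, apply the Moreau--Rockafellar sum rule to $L(\cdot,\cdot,\lambda,\mu)$ at the common continuity point $(x^0,y^0)$ to obtain \eqref{sum_rule_Lagragian1}, then identify $\bigcup_{(\lambda,\mu)\in\Lambda_0(\bar x,\bar y)}\partial L(\bar x,\bar y,\lambda,\mu)=\partial\varphi(\bar x,\bar y)+A+N((\bar x,\bar y);C)$ and read off both inclusions via the decomposition $(\bar x^*,0)=(x^*,y^*)+(u^*,-y^*)$. One cosmetic slip: your labels ``$\subset$'' and ``$\supset$'' are interchanged relative to the equality $\partial\mu(\bar x)=\{\cdots\}$ as written in \eqref{in_estimate}, but the two arguments themselves are correct and together give the full equality.
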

    \begin{proof} {To prove the inclusion ``$\subset$" in \eqref{in_estimate}, take any $\bar x^* \in \partial \mu(\bar x)$}. 
    	By Theorem \ref{Thm5.21},  there exist $(x^*,y^*)\in \partial\varphi (\bar x, \bar y)$ and $u^*\in\tilde{Q}^*$ such that $\bar x^*=x^*+u^*$. According to \eqref{Q_star}, the condition $u^*\in\tilde{Q}^*$ means that \begin{align}\label{basic_inclusion} (u^*,-y^*) \in N((\bar x,\bar y);C)+A,\end{align} {where $A$ is given by \eqref{sum_A}}. Adding the inclusion $(x^*,y^*)\in \partial\varphi (\bar x, \bar y)$ and that one in \eqref{basic_inclusion} yields
    	\begin{align*}
        (x^*+u^*,0) \in (x^*, y^*)+A+N((\bar x,\bar y);C).
    	\end{align*} Hence,
    	\begin{align}\label{incl_for_bar_x_star} (\bar x^*,0)\in \partial\varphi (\bar x, \bar y) +A+N((\bar x,\bar y);C).
    	\end{align} For every $(\lambda, \mu) \in \Lambda_0(\bar x, \bar y)$, the assumptions made on the functions $\varphi$, $g_i$, $h_j$, and the set $C$ allow us to apply the Moreau-Rockafellar Theorem (see Theorem~\ref{MoreauRockafellar}) to the Lagrangian function $L(x,y,\lambda,\mu)$ defined by \eqref{Lagrangian_function} to get 
    	\begin{align}\label{sum_rule_Lagragian1}
    	\partial\, L(\bar x, \bar y, \lambda, \mu)= \partial \varphi(\bar x, \bar y) \!\!+\!\! \sum\limits_{i\in I(\bar x,\bar y)} \lambda_i \partial g_i(\bar x, \bar y)\!+\! \sum\limits_{j\in J}\mu_j \partial h_j(\bar x, \bar y)\!+\!N((\bar x, \bar y);C).
    	\end{align} Since  $\partial h_j(\bar x, \bar y)=\{(x_j^*,y_j^*)\}$, from \eqref{sum_rule_Lagragian1} it follows that
    	\begin{align}\label{basic-equality}\partial\varphi (\bar x, \bar y) +A+N((\bar x,\bar y);C)=\bigcup\limits_{(\lambda, \mu) \in \Lambda_0(\bar x, \bar y)} \partial L(\bar x, \bar y, \lambda, \mu).\end{align}  So, \eqref{incl_for_bar_x_star} means that
       	\begin{align}\label{incl_for_bar_x_star_2} \bar x^*\in\bigcup\limits_{(\lambda, \mu) \in \Lambda_0(\bar x, \bar y)} {\rm pr}_{X^*} \bigg(\partial L(\bar x, \bar y, \lambda, \mu)\cap \big(X^* \times \{0\}\big)\bigg).
    	\end{align}
    	Thus, the inclusion ``$\subset$" in \eqref{in_estimate} is valid. To obtain the reverse inclusion, fixing any $\bar x^*$	satisfying \eqref{incl_for_bar_x_star_2} we have to show that $\bar x^*\in \partial \mu(\bar x)$. As it has been noted before, \eqref{incl_for_bar_x_star_2} is equivalent to \eqref{incl_for_bar_x_star}. Select a pair $(x^*,y^*)\in\partial\varphi (\bar x, \bar y)$ satisfying
    		\begin{align*} (\bar x^*,0)\in (x^*,y^*)+A+N((\bar x,\bar y);C).
    		\end{align*} 
    		Then, for $u^*:=\bar x^*-x^*$, one has 
    			\begin{align*} (x^*+u^*,0)\in (x^*,y^*)+A+N((\bar x,\bar y);C).
    			\end{align*} Therefore, the inclusion \eqref{basic_inclusion} holds. Hence, thanks to \eqref{Q_star} and \eqref{UpperEstimate1}, the vector $\bar x^*=x^*+u^*$ belongs to $\partial \mu(\bar x)$. 
    			
    			The proof is complete. $\hfill\Box$
\end{proof}

{As an illustration for Theorem \ref{computing_subdifferential}, let us consider the following simple example.}
  
  \begin{example}\label{Ex2} {\rm Let $X=Y=\mathbb R$, $C=X\times Y$, $\varphi(x,y)=|x+y|$, $m=1$, $k=0$ (no equality functional constraint), $g_1(x,y)=y$ for all $(x,y)\in X\times Y$. {Choosing $\bar x=0$, one has $M(\bar x)=\{\bar y\}$ with $\bar{y}=0$}. It is clear that  $\Lambda_0(\bar x,\bar y)=[0,\infty)$ and $L(x,y,\lambda)=\varphi(x,y)+\lambda y$. As in Example \ref{Ex1}, we have $$\partial \varphi (\bar x, \bar y)={\rm co}\left\{(1,1)^T,\, (-1,-1)^T \right\}.$$ Since $\partial L(\bar x,\bar y,\lambda)=\partial \varphi (\bar x, \bar y)+\{(0,\lambda)\}$, by \eqref{in_estimate} we can compute \begin{align*}
  		\partial \mu (\bar x)& = \left\{ \bigcup\limits_{\lambda \in \Lambda_0(\bar x, \bar y)} {\rm pr}_{X^*}\bigg(\partial L(\bar x, \bar y, \lambda)\cap \big(X^* \times \{0\}\big)\bigg)\right\}\\
  		& ={\rm pr}_{X^*}\left[\bigg(\bigcup\limits_{\lambda \in \Lambda_0(\bar x, \bar y)} \partial L(\bar x, \bar y, \lambda)\bigg)\cap \big(X^* \times \{0\}\big)\right]\\
  		& = {\rm pr}_{X^*}\bigg\{\left[{\rm co}\left\{(1,1)^T,\, (-1,-1)^T \right\} +\Big(\{0\}\times \mathbb{R}_+\Big)\right]\cap \big(X^* \times \{0\}\big)\bigg\}\\
  		& = [-1,0].
  		\end{align*} To verify this result, observe that
  	\begin{align*}
  	 \mu (x)=\inf\left\{|x+y|\mid y \leq 0\right\}=\begin{cases}
  0, & {\rm if}\ x\geq 0,\\
  -x, & {\rm if}\ x<0.
\end{cases}
  	\end{align*} So we find $\partial \mu (\bar x)= [-1,0]$, justifying \eqref{in_estimate} for the problem under consideration.
}	
  	\end{example}
 
    We are now in a position to establish an upper estimate for the subdifferential $\mu(.)$ at $\bar x$ by using the Lagrange multiplier set $\Lambda(\bar x, \bar y)$ corresponding to a solution $\bar y$ of $(\Tilde{P}_{\bar x})$. 
    
  \begin{theorem}\label{outer_theorem} 
  	Under the assumptions of Theorem \ref{computing_subdifferential}, one has 
   \begin{align}\label{outer_estimate}
   \partial \mu (\bar x) \subset \bigcup\limits_{(\lambda, \mu) \in \Lambda(\bar x, \bar y)} \partial_x L(\bar x, \bar y, \lambda,\mu),
   \end{align}
   where $\partial_x L(\bar x, \bar y, \lambda, \mu)$ stands for the subdifferential of $L(., \bar y, \lambda, \mu) $ at $\bar x$.
    \end{theorem}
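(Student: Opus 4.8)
The plan is to read the claimed inclusion off directly from the exact formula \eqref{in_estimate} of Theorem~\ref{computing_subdifferential}, using the elementary product inclusion \eqref{inclusion_partial} for the Lagrangian and the observation that the zero appearing in the $Y^*$-component of the subgradients in \eqref{in_estimate} is exactly the stationarity condition $0\in\partial_y L(\bar x,\bar y,\lambda,\mu)$ which distinguishes the Lagrange multiplier set $\Lambda(\bar x,\bar y)$ from the larger set $\Lambda_0(\bar x,\bar y)$.

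First I would fix an arbitrary $\bar x^*\in\partial\mu(\bar x)$; if $\partial\mu(\bar x)=\emptyset$, then \eqref{outer_estimate} holds trivially. Since the hypotheses are those of Theorem~\ref{computing_subdifferential}, formula \eqref{in_estimate} yields a pair $(\lambda,\mu)\in\Lambda_0(\bar x,\bar y)$ with $(\bar x^*,0)\in\partial L(\bar x,\bar y,\lambda,\mu)$, where $L(\cdot,\cdot,\lambda,\mu)$ is the proper convex function on $X\times Y$ defined by \eqref{Lagrangian_function} (properness and finiteness at $(\bar x,\bar y)$ follow since $\bar y\in M(\bar x)\subset G(\bar x)$, so $(\bar x,\bar y)\in C$, $g(\bar x,\bar y)\le 0$, $h(\bar x,\bar y)=0$, while $\varphi(\bar x,\bar y)=\mu(\bar x)$ is finite, and $\lambda\ge 0$ makes $\lambda^Tg$ convex). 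Applying \eqref{inclusion_partial} with $L(\cdot,\cdot,\lambda,\mu)$ in the role of $\varphi$ gives
$$ (\bar x^*,0)\in\partial L(\bar x,\bar y,\lambda,\mu)\subset\partial_x L(\bar x,\bar y,\lambda,\mu)\times\partial_y L(\bar x,\bar y,\lambda,\mu), $$
so, reading off the two coordinates, $\bar x^*\in\partial_x L(\bar x,\bar y,\lambda,\mu)$ and $0\in\partial_y L(\bar x,\bar y,\lambda,\mu)$. It then remains only to note that $(\lambda,\mu)\in\Lambda(\bar x,\bar y)$: indeed, $(\lambda,\mu)\in\Lambda_0(\bar x,\bar y)$ means $\lambda_i\ge 0$ for all $i\in I$ and $\lambda_i=0$ for $i\in I\setminus I(\bar x,\bar y)$, whence $\lambda_ig_i(\bar x,\bar y)=0$ for every $i=1,\dots,m$ (for $i\in I(\bar x,\bar y)$ one has $g_i(\bar x,\bar y)=0$); together with $0\in\partial_y L(\bar x,\bar y,\lambda,\mu)$ this is precisely the defining system of $\Lambda(\bar x,\bar y)$. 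Hence $\bar x^*\in\partial_x L(\bar x,\bar y,\lambda,\mu)$ for some $(\lambda,\mu)\in\Lambda(\bar x,\bar y)$, which is \eqref{outer_estimate}.

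I do not expect a genuine obstacle here: once Theorem~\ref{computing_subdifferential} is available the argument is essentially a one-line manipulation, and the only points requiring care are the finiteness of $L(\cdot,\cdot,\lambda,\mu)$ at $(\bar x,\bar y)$ (needed for the partial subdifferentials $\partial_x L$ and $\partial_y L$ there to be meaningful) and the trivial reformulation of the systems defining $\Lambda_0(\bar x,\bar y)$ and $\Lambda(\bar x,\bar y)$. It would also be natural to add, right after the proof, a remark that the inclusion \eqref{outer_estimate} may be strict, the obstruction being exactly the possible strictness of \eqref{inclusion_partial} exhibited in Example~\ref{Ex1}.
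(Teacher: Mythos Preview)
Your proof is correct and follows essentially the same route as the paper's: both start from $(\bar x^*,0)\in\partial L(\bar x,\bar y,\lambda,\mu)$ for some $(\lambda,\mu)\in\Lambda_0(\bar x,\bar y)$, pass to the two partial inclusions $\bar x^*\in\partial_x L$ and $0\in\partial_y L$, and then observe that the latter together with complementary slackness upgrades $(\lambda,\mu)$ from $\Lambda_0(\bar x,\bar y)$ to $\Lambda(\bar x,\bar y)$. The only cosmetic difference is that you cite the product inclusion \eqref{inclusion_partial} to split the joint subgradient, whereas the paper writes out the defining inequalities of the subdifferential directly; these are of course the same argument.
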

     \begin{proof}  Fix an arbitrary vector $\bar {x}^* \in \partial\, \mu(\bar x)$. The arguments in the first part of the proof of Theorem \ref{computing_subdifferential} show that 
     	\eqref{incl_for_bar_x_star} and \eqref{basic-equality} are valid. Hence, we can find a vector $(\lambda, \mu) \in \Lambda_0(\bar x, \bar y)$ such that \begin{align}
     	\label{incl2_for_bar_x_star} (\bar x^*,0)\in  \partial L(\bar x, \bar y, \lambda, \mu).
     	\end{align} Using the definition of subdifferential, from \eqref{incl2_for_bar_x_star} we can deduce that
     	$$\langle \bar x^*,x-\bar x\rangle \leq L(x, \bar y, \lambda, \mu)- L(\bar x, \bar y, \lambda, \mu)\quad \forall x\in X$$
     	and 
     	$$\langle 0,y-\bar y\rangle \leq L(\bar x,y, \lambda, \mu)- L(\bar x, \bar y, \lambda, \mu)\quad \forall y\in Y.$$
     		Hence,
     	 \begin{align}
     	\label{two_incls} 
     \bar x^*\in \partial_xL(\bar x, \bar y, \lambda, \mu),\ \; 
     0\in  \partial_y L(\bar x, \bar y, \lambda, \mu).
     	\end{align} Since $(\lambda, \mu) \in \Lambda_0(\bar x, \bar y)$, one has $\lambda_ig_i(\bar x,\bar y)=0$ and $\lambda_i\geq 0$ for every $i\in I$. Therefore, the second inclusion in \eqref{two_incls} implies that $(\lambda, \mu) \in \Lambda (\bar x, \bar y)$. Then,~\eqref{outer_estimate} follows from  the first inclusion in \eqref{two_incls}.$\hfill\Box$     	
   \end{proof}
   
   The next example shows that the inclusion in Theorem \ref{outer_theorem} can be strict.
   
   \begin{example}{\rm Let $X=Y=\mathbb R$, $C=X\times Y$, $\varphi(x,y)=|x+y|$, $m=1$, $k=0$ (no equality functional constraint), $g_1(x,y)=y$ for all $(x,y)\in X\times Y$. Choosing $\bar x=0$, we note that $M(\bar x)=\{\bar y\}$ with $\bar{y}=0$. We have $L(x,y,\lambda)=\varphi(x,y)+\lambda y$ and \begin{align*}\Lambda(\bar x,\bar y)&=\{\lambda \ge 0\mid 0 \in \partial_y L(\bar x, \bar y, \lambda) \}\\
   &=\{\lambda \ge 0\mid 0 \in [-1,1]+\lambda \}\\
   &=[0,1].
   \end{align*} As in Example \ref{Ex2}, one has $\partial \mu(\bar x)=[-1,0].$ We now compute the right-hand-side of \eqref{outer_estimate}. By simple computation, we obtain $\partial_x L(\bar x, \bar y, \lambda)=[-1,1]$ for all $\lambda \in \Lambda(\bar x, \bar y).$ Then $\bigcup\limits_{\lambda \in \Lambda(\bar x, \bar y)} \partial_x L(\bar x, \bar y, \lambda)=[-1,1].$ Therefore, in this example, inclusion~\eqref{outer_estimate} is strict.
   }
   \end{example}

\section{Computation of the singular subdifferential}
\markboth{\centerline{\it Computation of the singular subdifferential}}{\centerline{\it D.T.V.~An
 and N.D.~Yen}} 
First, we observe that $x\in {\rm dom}\,\mu$ if and only if 
$$\mu(x)={\rm inf}\{\varphi(x,y)\mid y \in G(x) \}< \infty,$$ with $G(x)$ being given by \eqref{constraint}.
Since the strict inequality holds if and only if {there exists} $y \in G(x)$ with $(x,y)\in {\rm dom}\, \varphi$, we have 
\begin{align}
\label{new_problem}
{\iota_{{\rm dom}\,\mu}(x)}={\rm inf}\{{\iota_{{\rm dom}\, \varphi}((x,y))}\mid y \in G(x) \}.
\end{align}
To compute the singular subdifferential of $\mu(.)$, let us consider the minimization problem 
\begin{align*}\big(P^\infty_x\big)\quad \begin{cases} {\iota_{{\rm dom}\, \varphi}((x,y))}\to \inf & \\
{\rm subject\ to}\ \; y\in C(x),\  g_i(x,y) \le 0,\  i \in I,\ h_j(x,y)=0,\ j\in J.&
\end{cases}\end{align*}

The Lagrangian function corresponding to $(P^\infty_x)$ is
\begin{align}
\label{new_largrange_function}
 \widehat{L}(x,y,\lambda,\mu):= {\iota_{{\rm dom}\, \varphi}((x,y))} + \lambda^T g(x,y) + \mu^T h(x,y)+{\iota_C((x,y))},
 \end{align}
 where $\lambda=(\lambda_1,\lambda_2,...,\lambda_m)\in \mathbb{R}^m,$  $\mu=(\mu_1,\mu_2,...,\mu_k)\in \mathbb{R}^k$. 
 
 \medskip
 Interpreting $(P^\infty_x)$ as a problem of the form $(\widetilde P_x)$, where ${\iota_{{\rm dom}\, \varphi}((x,y))}$ plays the role of $\varphi(x,y)$, we can apply Theorem~\ref{computing_subdifferential} (resp., Theorem~\ref{outer_theorem}) to compute (resp., estimate) the singular subdifferential of $\mu(.)$ as follows.

\begin{theorem}\label{thm_singular_computing}
Under the hypotheses of Theorem \ref{computing_subdifferential}, for any $\bar y \in M(\bar x)$, one has     
 	    	\begin{align}\label{singular_computing}
 	    	\partial^\infty \mu (\bar x)= \left\{ \bigcup\limits_{(\lambda, \mu) \in \Lambda_0(\bar x, \bar y)} {\rm pr}_{X^*} \bigg(\partial\widehat{L}(\bar x, \bar y, \lambda, \mu)\cap \big(X^* \times \{0\}\big)\bigg)\right\},
 	    	\end{align}
 	    	where \begin{align}\label{singular_computing2}\partial \widehat{L}(\bar x, \bar y, \lambda, \mu)\!=\!\partial^\infty\varphi(\bar x, \bar y) \!+\! \sum\limits_{i\in I(\bar x,\bar y)} \lambda_i \partial g_i(\bar x, \bar y)\!+\! \sum\limits_{j\in J}\mu_j \partial h_j(\bar x, \bar y)\!+\!N((\bar x, \bar y);C)\end{align} is the subdifferential of the function $\widehat{L}(., ., \lambda, \mu) $ at $(\bar x,\bar y)$, provided that a pair $(\lambda, \mu) \in \Lambda_0(\bar x, \bar y)$ has been chosen.
 	    \end{theorem}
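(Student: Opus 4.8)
The plan is to reduce Theorem~\ref{thm_singular_computing} to Theorem~\ref{computing_subdifferential} by a substitution argument, and separately to verify the auxiliary sum rule \eqref{singular_computing2}. First I would observe that, by the last sentence of Section~2 (namely \cite[Proposition~4.2]{AnYen}), for the convex function $\varphi$ one has $\partial^\infty\varphi(\bar x,\bar y)=N\big((\bar x,\bar y);{\rm dom}\,\varphi\big)=\partial\iota_{{\rm dom}\,\varphi}\big((\bar x,\bar y)\big)$. Next I would argue that $(P^\infty_x)$ is literally a problem of type $(\widetilde P_x)$ with the proper convex function $\iota_{{\rm dom}\,\varphi}(\cdot)$ in place of $\varphi$, the same $C$, the same $g_i$, and the same (affine) $h_j$. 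The key point to check is that the hypotheses of Theorem~\ref{computing_subdifferential} are inherited: the continuity requirement on the objective at $(x^0,y^0)$ is now replaced by the requirement that $\iota_{{\rm dom}\,\varphi}(\cdot)$ be continuous at $(x^0,y^0)$, and this holds precisely because $\varphi$ itself is continuous at $(x^0,y^0)\in{\rm int}\,C$, so $(x^0,y^0)\in{\rm int}({\rm dom}\,\varphi)$, on a neighborhood of which $\iota_{{\rm dom}\,\varphi}$ is identically $0$, hence continuous. The inequality and equality conditions $g_i(x^0,y^0)<0$, $h_j(x^0,y^0)=0$ are unchanged. I would also note that \eqref{new_problem} shows the optimal value function of $(P^\infty_x)$ equals $\iota_{{\rm dom}\,\mu}(\cdot)$, whose subdifferential at $\bar x$ is $N(\bar x;{\rm dom}\,\mu)=\partial^\infty\mu(\bar x)$, again by \cite[Proposition~4.2]{AnYen}; and that $\bar y\in M(\bar x)$ forces $(\bar x,\bar y)\in{\rm dom}\,\varphi$, so $\bar y$ is feasible (indeed optimal with value $0$) for $(P^\infty_{\bar x})$, as required to invoke Theorem~\ref{computing_subdifferential}.

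With these identifications, applying Theorem~\ref{computing_subdifferential} verbatim to $(P^\infty_x)$ yields
\begin{align*}
\partial^\infty\mu(\bar x)=\left\{\bigcup_{(\lambda,\mu)\in\Lambda_0(\bar x,\bar y)}{\rm pr}_{X^*}\Big(\partial\widehat L(\bar x,\bar y,\lambda,\mu)\cap\big(X^*\times\{0\}\big)\Big)\right\},
\end{align*}
which is exactly \eqref{singular_computing}, provided one checks that the multiplier set $\Lambda_0$ for $(P^\infty_x)$ coincides with $\Lambda_0(\bar x,\bar y)$ — and it does, since $\Lambda_0$ depends only on the sign constraints on $\lambda$ and on the active index set $I(\bar x,\bar y)$, none of which involves the objective function.

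It then remains to establish the sum-rule formula \eqref{singular_computing2} for $\partial\widehat L(\bar x,\bar y,\lambda,\mu)$. Here $\widehat L(\cdot,\cdot,\lambda,\mu)=\iota_{{\rm dom}\,\varphi}(\cdot)+\sum_{i\in I}\lambda_i g_i(\cdot)+\sum_{j\in J}\mu_j h_j(\cdot)+\iota_C(\cdot)$ is a sum of proper convex functions, of which all but $\iota_{{\rm dom}\,\varphi}(\cdot)$ and $\iota_C(\cdot)$ are continuous (the $g_i$ and $h_j$ being continuous by hypothesis). At the point $(x^0,y^0)\in{\rm int}\,C\cap{\rm int}({\rm dom}\,\varphi)$ both indicator functions are continuous, so by the Moreau--Rockafellar Theorem~\ref{MoreauRockafellar} the subdifferential of the sum splits as the sum of subdifferentials; using $\partial\iota_{{\rm dom}\,\varphi}((\bar x,\bar y))=\partial^\infty\varphi(\bar x,\bar y)$, $\partial(\lambda_i g_i)(\bar x,\bar y)=\lambda_i\partial g_i(\bar x,\bar y)$, $\partial h_j(\bar x,\bar y)=\{(x_j^*,y_j^*)\}$, and $\partial\iota_C((\bar x,\bar y))=N((\bar x,\bar y);C)$, together with the fact that $\lambda_i=0$ for $i\notin I(\bar x,\bar y)$, gives \eqref{singular_computing2}. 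This is the same computation that produced \eqref{sum_rule_Lagragian1} in the proof of Theorem~\ref{computing_subdifferential}, now carried out with $\iota_{{\rm dom}\,\varphi}$ replacing $\varphi$.

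I expect the main obstacle to be purely bookkeeping rather than conceptual: one must be careful that the continuity hypothesis of Theorem~\ref{computing_subdifferential}, stated for $\varphi$, translates correctly into a continuity statement for $\iota_{{\rm dom}\,\varphi}$ at the \emph{same} point $(x^0,y^0)$, and that $(x^0,y^0)\in{\rm int}({\rm dom}\,\varphi)$ follows from continuity of $\varphi$ there (which it does, by the continuity characterization theorem recalled in Section~2). Once this translation is made explicit, the rest is a direct appeal to Theorem~\ref{computing_subdifferential} and Theorem~\ref{MoreauRockafellar}, with the identification $\partial^\infty(\,\cdot\,)=\partial\iota_{{\rm dom}(\cdot)}$ supplied by \cite[Proposition~4.2]{AnYen}.
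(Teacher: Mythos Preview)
Your proposal is correct and follows essentially the same route as the paper's own proof: substitute $\iota_{{\rm dom}\,\varphi}$ for $\varphi$ so that $(P^\infty_x)$ becomes an instance of $(\widetilde P_x)$, observe that $\bar y$ solves $(P^\infty_{\bar x})$ with optimal value $\iota_{{\rm dom}\,\mu}(\bar x)$, invoke Theorem~\ref{computing_subdifferential}, and identify $\partial\iota_{{\rm dom}\,\mu}(\bar x)=\partial^\infty\mu(\bar x)$ and $\partial\iota_{{\rm dom}\,\varphi}((\bar x,\bar y))=\partial^\infty\varphi(\bar x,\bar y)$; the sum rule~\eqref{singular_computing2} is then obtained from Theorem~\ref{MoreauRockafellar} exactly as in~\eqref{sum_rule_Lagragian1}. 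If anything, you are slightly more explicit than the paper in checking that $\iota_{{\rm dom}\,\varphi}$ inherits continuity at $(x^0,y^0)$ (via $(x^0,y^0)\in{\rm int}({\rm dom}\,\varphi)$) and that $\Lambda_0(\bar x,\bar y)$ is objective-independent, both of which the paper leaves implicit.
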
 
 	    \begin{proof} The inclusion $\bar y \in M(\bar x)$ implies that $(\bar x,\bar y)\in {\rm dom}\, \varphi$ and $\bar y \in G(\bar x)$. So, ${\iota_{{\rm dom}\, \varphi}((\bar x,\bar y))}=0$ and $\bar y$ is a feasible point of the problem $\big(P^\infty_{\bar x}\big)$. As ${\iota_{{\rm dom}\, \varphi}((\bar x,y))}\geq 0$ for all $y\in G(\bar x)$, we can  assert that $\bar y$ is a solution of~$\big(P^\infty_{\bar x}\big)$. The corresponding optimal value is ${\iota_{{\rm dom}\,\mu}(\bar x)}=0$ (see \eqref{new_problem}). Hence, by Theorem \ref{computing_subdifferential} and formula \eqref{new_problem}, we have 
 	    		\begin{align*} 	\partial{\iota_{ {\rm dom}\,\mu}(\bar x)}
 	    	= \left\{ \bigcup\limits_{(\lambda, \mu) \in \Lambda_0(\bar x, \bar y)} {\rm pr}_{X^*} \bigg(\partial\widehat{L}(\bar x, \bar y, \lambda, \mu)\cap \big(X^* \times \{0\}\big)\bigg)\right\}.
 	    	\end{align*} Since $\partial{\iota_{ {\rm dom}\,\mu}(\bar x)}=\partial^\infty \mu (\bar x)$, the last equality implies \eqref{singular_computing}. 
 	    	
 	    	For every $(\lambda, \mu) \in \Lambda_0(\bar x, \bar y)$, remembering that $h_j$, $j\in J$, are affine functions, $\varphi$ is continuous at a point $(x^0,y^0)$ with $(x^0,y^0)\in {\rm int}\,C$, $g_i(x^0,y^0)<0$ for all $i\in I$ and $h_j(x^0,y^0)=0$ for all $j\in J$, we can apply Theorem~\ref{MoreauRockafellar} to the Lagrangian function ${\widehat L}(x,y,\lambda,\mu)$ defined by \eqref{new_largrange_function} to obtain
 	    	\begin{align*}\begin{array}{rl}
 	    	& \partial \widehat{L}(\bar x, \bar y, \lambda, \mu)\\
 	    	& =\partial{\iota_{{\rm dom}\, \varphi}((\bar x,\bar y))}  \!\!+\!\! \sum\limits_{i\in I(\bar x,\bar y)} \lambda_i \partial g_i(\bar x, \bar y)\!+\! \sum\limits_{j\in J}\mu_j \partial h_j(\bar x, \bar y)\!+\!N((\bar x, \bar y);C).
 	    	\end{array}\end{align*} Combining this with the equality $\partial{\iota_{{\rm dom}\, \varphi}((\bar x,\bar y))}=\partial^\infty\varphi(\bar x, \bar y)$ yields \eqref{singular_computing2}.
 	      $\hfill\Box$	
 	    \end{proof}
 	    
 	    \begin{remark} {\rm The result in Theorem \ref{thm_singular_computing} can be derived from formula \eqref{UpperEstimate2} by a proof analogous to that of Theorem  \ref{computing_subdifferential}.}
 	    	\end{remark}
     	
     	Next, denote by $\Lambda^\infty (\bar x, \bar y)$ the \textit{singular Lagrange multiplier set} corresponding to an optimal solution $\bar y$ of the problem $(P^\infty_{\bar x})$, which consists of the pairs $(\lambda, \mu)\in \mathbb{R}^m \times \mathbb{R}^k$ satisfying
     	\begin{equation*}\begin{cases}
     	0 \in \partial _y {\widehat L}(\bar x, \bar y, \lambda, \mu), 
     	\\ 
     	\lambda_i g_i(\bar x, \bar y)=0, \; i=1,\dots,m,\\
     	\lambda_i \ge 0,\; i=1,\dots,m.
     	\end{cases}
     	\end{equation*}
     Here $\partial_y {\widehat L}(\bar x, \bar y, \lambda, \mu)$ is the subdifferential of the function ${\widehat L}(\bar x , ., \lambda, \mu)$, with ${\widehat L}(x,y, \lambda, \mu)$ being given by \eqref{new_largrange_function}, at $\bar y$.
     	
     	 \begin{theorem}\label{outer_theorem_singular} 
     		Under the assumptions of Theorem \ref{computing_subdifferential}, for any $\bar y \in M(\bar x)$, one has 
     		\begin{align}\label{outer_estimate_singular}
     		\partial^\infty \mu (\bar x) \subset \bigcup\limits_{(\lambda, \mu) \in \Lambda^\infty (\bar x, \bar y)} \partial_x {\widehat L}(\bar x, \bar y, \lambda,\mu),
     		\end{align}
     		where $\partial_x {\widehat L}(\bar x, \bar y, \lambda, \mu)$ stands for the subdifferential of ${\widehat L}(., \bar y, \lambda, \mu) $ at $\bar x$. 
     	\end{theorem}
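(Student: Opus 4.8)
The plan is to follow, essentially line by line, the proof of Theorem~\ref{outer_theorem}, with the Lagrangian $L$ replaced by the singular Lagrangian $\widehat{L}$ defined in \eqref{new_largrange_function} and the multiplier set $\Lambda(\bar x,\bar y)$ replaced by $\Lambda^\infty(\bar x,\bar y)$. First I would fix an arbitrary vector $\bar x^*\in\partial^\infty\mu(\bar x)$. Since Theorem~\ref{thm_singular_computing} has already been established under the present hypotheses, formula \eqref{singular_computing} applies and provides a pair $(\lambda,\mu)\in\Lambda_0(\bar x,\bar y)$ together with the inclusion $(\bar x^*,0)\in\partial\widehat{L}(\bar x,\bar y,\lambda,\mu)$.

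The second step is to split this inclusion into its two partial components. Writing out the definition of the subdifferential of the convex function $\widehat{L}(\cdot,\cdot,\lambda,\mu)$ at $(\bar x,\bar y)$ and testing it successively against points $(x,\bar y)$ with $x\in X$ arbitrary and against points $(\bar x,y)$ with $y\in Y$ arbitrary, I would obtain
\[
\bar x^*\in\partial_x\widehat{L}(\bar x,\bar y,\lambda,\mu),\qquad 0\in\partial_y\widehat{L}(\bar x,\bar y,\lambda,\mu);
\]
this is precisely the passage that produced \eqref{two_incls} in the proof of Theorem~\ref{outer_theorem}, and it may equally be regarded as a particular case of the elementary inclusion \eqref{inclusion_partial}.

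Finally, since $(\lambda,\mu)\in\Lambda_0(\bar x,\bar y)$ we already know that $\lambda_i\ge 0$ and $\lambda_i g_i(\bar x,\bar y)=0$ for all $i\in I$; adjoining to these the inclusion $0\in\partial_y\widehat{L}(\bar x,\bar y,\lambda,\mu)$ obtained above shows, directly from the definition of $\Lambda^\infty(\bar x,\bar y)$, that $(\lambda,\mu)\in\Lambda^\infty(\bar x,\bar y)$. Combining this with $\bar x^*\in\partial_x\widehat{L}(\bar x,\bar y,\lambda,\mu)$ yields $\bar x^*\in\bigcup_{(\lambda,\mu)\in\Lambda^\infty(\bar x,\bar y)}\partial_x\widehat{L}(\bar x,\bar y,\lambda,\mu)$, which is exactly \eqref{outer_estimate_singular}. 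I do not anticipate any genuine obstacle: the only delicate point is to make sure the hypotheses of Theorem~\ref{computing_subdifferential} transfer to the auxiliary problem $(P^\infty_x)$ — that $\bar y$ solves $(P^\infty_{\bar x})$, that $\iota_{{\rm dom}\,\varphi}$ takes over the role of $\varphi$, and that $\partial\widehat{L}$ decomposes as in \eqref{singular_computing2} — but all of this has already been verified inside the proof of Theorem~\ref{thm_singular_computing}, so that theorem may simply be invoked as a black box.
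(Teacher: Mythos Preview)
Your proposal is correct and follows essentially the same route as the paper. The only cosmetic difference is packaging: the paper applies Theorem~\ref{outer_theorem} directly to the auxiliary problem $(P^\infty_x)$ (using $\partial\iota_{{\rm dom}\,\mu}(\bar x)=\partial^\infty\mu(\bar x)$) as a black box, whereas you invoke Theorem~\ref{thm_singular_computing} first and then re-run the partial-subdifferential splitting step from the proof of Theorem~\ref{outer_theorem}; the content of the argument is the same.
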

     	\begin{proof} To get \eqref{outer_estimate_singular}, it suffices to apply Theorem~\ref{outer_theorem} to the parametric problem~$(P^\infty_{\bar x})$, keeping in mind that $\bar y$ is a solution of $(P^\infty_{\bar x})$. Indeed, taking account of Theorem~\ref{outer_theorem} and \eqref{outer_estimate}, one has
     		\begin{align*} \partial{\iota_{{\rm dom}\,\mu}(\bar x)}
     		\subset \bigcup\limits_{(\lambda, \mu) \in \Lambda^\infty (\bar x, \bar y)} \partial_x {\widehat L}(\bar x, \bar y, \lambda,\mu).
     		\end{align*} As $\partial{\iota_{ {\rm dom}\,\mu}(\bar x)}=\partial^\infty \mu (\bar x)$, this inclusion is equivalent to \eqref{outer_estimate_singular}.    $\hfill\Box$	
     	\end{proof}

	\section*{Acknowledgements}
	
\noindent 
	The first author was supported by the Vietnam Institute for Advanced Study in Mathematics (VIASM) and Thai Nguyen University of Sciences. This research is funded by the National Foundation for Science and Technology Development (Vietnam) under grant number 101.01-2014.37. The authors would like to thank the two anonymous referees for their very careful readings and valuable suggestions which have helped to greatly improve the presentation.

\end{document}